\newcommand{\Rb}{\mathbb{R}}
\newcommand{\Qb}{\mathbb{Q}}
\newcommand{\mathsym}[1]{{}}
\newcommand{\unicode}[1]{{}}
\newtheorem{theorem}{Theorem}[section]
\newtheorem{corollary}{Corollary}
\newtheorem{proposition}{Proposition}
\theoremstyle{definition}
\newtheorem{definition}[theorem]{Definition}
\newtheorem{remark}{Remark}
\newtheorem{example}{Example}
\begin{document}

\title{High degree quadrature rules with pseudorandom rational nodes}

\author{M{\' a}rio M. Gra{\c c}a
\thanks{Departamento de Matem\'{a}tica,
Instituto Superior T\'ecnico, Universidade  de Lisboa,
 Av. Rovisco Pais,                 
1049--001 Lisboa, Portugal, 
e-mail: \texttt{\url{mario.meireles.graca@tecnico.ulisboa.pt}}}
}
 
\maketitle

\begin{abstract}

\noindent
After introducing the definitions 
of {\em positive, negative} and {\em companion rules},  from a given  pair of companion rules we construct a new rule  with higher degree of precision  The scheme is generalized giving rise to a transformation which we call the {\em mean rule}. 
We show that the mean rule is the best approximation, in the sense of least\--squares,
obtained from a linear combination of two rules of the same degree of precision. Finally, we show that a rule of degree $2k+1$ can be constructed as linear combination of $k+1$ rules of degree one and rational pseudorandom nodes. Several worked examples are presented.
 \end{abstract}

\medskip
\noindent
{\it Key words}:
Positive rule; Negative rule; Companion rules; Combined rule;  Midpoint; Trapezoidal; Simpson rule; Mean rule; Pseudorandom node.

\medskip
\noindent
{\it 2010 Mathematics Subject Classification}: 65-05, 65D30, 65D32.

 \section{Introduction}\label{introd}

 \noindent
 Given two quadrature rules $A(g)$ and $B(g)$ with the same degree of precision $m\geq 0$, we begin by proposing a scheme to construct a new rule $Y(g)$ of higher degree.  One desirable assumption is that the rules $A(g)$ and $B(g)$ are {\em companion}, in the sense that one can assign opposite signals to the respective error. In particular, we show that the basic quadrature rules known as midpoint, trapezoidal, and Simpson rules can all be obtained as linear combinations of companion rules of lower degree of precision. The theoretical background applied in this work relies on the method of undetermined coefficients (\cite{dahlquist}, p. 565), (\cite{gautschi} p. 170).

 \medskip
 \noindent
We generalize the referred scheme by presenting a rule transformation $W(g)$ (defined in the set ${\cal Q}$ of rules of degree $m$), which to a pair of rules $\left(A(g),B(g)\right)$ of ${\cal Q}$, assigns a new rule of greater degree. This leads  to an algorithm to obtain quadrature rules of arbitrary order of precision, as suggested in the worked examples. The rule $W(g)$ is called the {mean rule} since it is a weighted mean of $A(g)$ and $B(g)$. It can be seen as a least\--squares best approximation as discussed in paragraph \ref{best}.

\medskip
\noindent
The main results of this paper are discussed in Section \ref{openrules}. We first show that if one takes a set of $k+1$ of open rules of degree one, $Q_0(g),\ldots, Q_k(g)$, whose first member is the midpoint rule $Q_0(g)=2\, g(0)$ and the other members have two symmetrical rational nodes,  there exists a unique  linear combination of the rules such that the combined rule $W_k(g)$ has degree $2\,k+1$ (see Proposition \ref{prop3}). As an illustration we apply the composite version of the  rule $W_5 (g)$ to obtain approximations of $\pi$ with 60 significant digits (Example \ref{exemplo9}) using the model function $g(t)=2/(1+t^2), \,\-1\leq t\leq 1$.

\medskip
\noindent
Finally we show that one can expand the scheme considering combined rules where the nodes of the $1$\--degree starting rules are pseudorandom rational numbers (paragraph \ref{secran}). In particular, we use a pseudorandom $151$\--degree combined rule giving an approximation of $\pi$ with more than $500$ significant digits as detailed in Example \ref{exemplo10}.

\section{Notation and definitions}\label{taylor}


\noindent
A quadrature rule $Q(f)$ is an approximation of the integral $\displaystyle \int_a^b f(x) dx$ obtained using values of $f$ (and/or its derivatives) on a discrete set of points in $[a,b]$ (see for instance  \cite{engels}).

\medskip
\noindent
Without loss of generality, we consider $[-1,1]$ to be  the interval of integration and we will denote by $Q(g)$ a general quadrature rule to approximate the integral $I(g)=\displaystyle \int_{-1}^1 g(t)\, dt$. 

\medskip
\noindent
 Note that a rule $Q(g)$, defined in $[-1,1]$, can be rewritten for the interval $[a,b]$ as $Q(f)$, through a change of variable defined by the bijection
$$
\sigma(t)= a+ \displaystyle \frac{b-a}{2}\, (t+1),\,\quad -1\leq t\leq 1
$$
where $g(t)=f(\sigma(t))\ .$

\medskip
\noindent
The monomials $1, t, t^2,\ldots$, are denoted by $\phi_j(t)= t^j$, for $j=0,1,\ldots$.

\begin{definition} (Degree $m$ of rule)\label{def2}

\noindent
Let $m\geq 0$ be an integer, $\mu_0=2$, $\mu_j=0$ for odd $j$ and  $\mu_j= \displaystyle \int_{-1}^1 \phi_j(t)\, dt=2/(j+1)$, for  even $j$, and let
$$
\mu_{m+1}=  \displaystyle \int_{-1}^1 \phi_{m+1}(t)\, dt.
$$
A rule $Q(g)$ is of degree $m\geq 0$ if it is exact for $\phi_j(t)$, with $0\leq j\leq m$, but not exact for  $\phi_{m+1}(t)$, that is,
$$
\left\{
\begin{array}{ll}
Q(\phi_0)&=\mu_0\\
Q(\phi_1)&=\mu_1\\
\qquad &\vdots\quad \\
Q(\phi_m)&= \mu_m,
\end{array}
\right.
$$
but
$$
Q(\phi_{m+1})\neq \mu_{m+1} \ .
$$
\end{definition}

\noindent
In what follows, $\mu_{m+1}$ (a.k.a. the principal moment) plays a key role and we shorten its notation to
\begin{equation}\label{pm}
\mu=\displaystyle  \int_{-1}^1\phi_{m+1}(t) \,dt =\displaystyle \frac{2}{m+2}\ .
\end{equation}

\begin{definition}\label{def3} (Sign of a rule)

\noindent
Let $Q_m(f)$ be a quadrature rule of degree $m \,(m\geq 0)$, such that 
$$
 \gamma_Q=\mu- Q(\phi_{m+1})\ .
$$
The rule is {\em positive} (resp. {\em negative}) if $\gamma_Q>0$ (resp. $\gamma_Q<0$).
\end{definition}

\noindent
A pair of rules of the same degree whose respective value $\gamma$ have opposite signs are rules of particular interest. Such rules will be called {\em companion} rules.

\begin{definition}\label{def4}(Companion rules)

\noindent
Two rules $A(g)$ and $B(g)$, of the same degree $m$, such that $\gamma_A$ and $\gamma_B$ have opposite signs are called {\em companion rules}.
\end{definition}

\section{Linear combination of two rules}\label{combination}

We now address the problem of combining a pair of companion rules and show that the resulting rule is not only a weighted mean of the two rules considered but also it has higher degree of precision. An obvious  advantage of this approach is that at a minor computational cost, the value of the new rule can be closer to the integral than the values of the rules of the starting pair (see Example \ref{exemplo2}).

\begin{proposition}\label{proposicao1}
Let $A(g)$ and $B(g)$ be two rules both of degree $m\geq 0$, such that
\begin{equation}\label{prop1A}
\mu_A=A(\phi_{m+1})\quad \neq\quad \mu_B=B(\phi_{m+1}),
\end{equation}
and consider the  linear combination of the rules
\begin{equation}\label{prop1B}
Y(g)=\displaystyle \frac{\mu-\mu_B}{\mu_A-\mu_B}\, A(g)+ \displaystyle \frac{\mu_A-\mu}{\mu_A-\mu_B}\, B(g), \,
\end{equation}
where $\mu$ is the principal moment in \eqref{pm}.
Then, the degree of the rule $Y(g)$ is at least $m+1$.
\end{proposition}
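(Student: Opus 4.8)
The plan is to exploit the linearity of quadrature rules together with the observation that the two coefficients in \eqref{prop1B} sum to one. First I would abbreviate $\alpha=(\mu-\mu_B)/(\mu_A-\mu_B)$ and $\beta=(\mu_A-\mu)/(\mu_A-\mu_B)$, noting that hypothesis \eqref{prop1A} guarantees $\mu_A-\mu_B\neq 0$, so both coefficients are well defined, and that a one-line computation gives $\alpha+\beta=1$. Thus $Y(g)=\alpha A(g)+\beta B(g)$ is an affine combination of $A$ and $B$, and I can treat exactness degree by degree.

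Next I would verify exactness on the low-order monomials. For each $j$ with $0\leq j\leq m$, both $A$ and $B$ are exact because they have degree $m$, so $A(\phi_j)=B(\phi_j)=\mu_j$; by linearity and $\alpha+\beta=1$ this yields $Y(\phi_j)=(\alpha+\beta)\mu_j=\mu_j$. Hence $Y$ reproduces every moment up to order $m$, exactly as $A$ and $B$ do.

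The crux is the single remaining monomial $\phi_{m+1}$, and this is where the specific weights do their work. Evaluating $Y(\phi_{m+1})=\alpha\mu_A+\beta\mu_B$ and substituting the definitions of $\alpha$ and $\beta$, the numerator collapses to $\mu(\mu_A-\mu_B)$, so that $Y(\phi_{m+1})=\mu=\mu_{m+1}$. In other words, the coefficients are precisely the linear-interpolation weights that place the value of the combined rule on $\phi_{m+1}$ exactly at the target moment $\mu$. Combining this with the previous step shows $Y$ is exact for $\phi_0,\ldots,\phi_{m+1}$, so by Definition \ref{def2} its degree is at least $m+1$.

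There is no serious obstacle here: the whole statement reduces to routine algebra once the weights are recognized as the solution of the scalar equation $\alpha\mu_A+(1-\alpha)\mu_B=\mu$. The only point needing care is the nondegeneracy $\mu_A\neq\mu_B$ supplied by \eqref{prop1A}, which keeps the weights finite. I would also remark that when $A$ and $B$ are companion in the sense of Definition \ref{def4}, the target $\mu$ lies strictly between $\mu_A$ and $\mu_B$, forcing $\alpha,\beta>0$, so that $Y$ is genuinely a weighted mean of the two rules rather than an extrapolation.
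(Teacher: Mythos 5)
Your proof is correct and follows essentially the same route as the paper's: exactness for $\phi_0,\ldots,\phi_m$ via the coefficients summing to one, and exactness for $\phi_{m+1}$ by the same algebraic collapse of $\alpha\mu_A+\beta\mu_B$ to $\mu$. Your closing remark about the companion case anticipates Corollary~\ref{cor1} but is not needed for the proposition itself.
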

\begin{corollary}\label{cor1}
If $A(g)$ and $B(g)$ are companion rules of degree $m\geq 0$, such that
$$
\mu_B<\mu<\mu_A\quad \mbox{or}\quad \mu_A<\mu<\mu_B,
$$
then the combined rule \eqref{prop1B} has degree at least $m+1$ and
\begin{equation}\label{prop1D}
A(g)\leq Y(g)\leq B(g)\quad \mbox{or} \quad B(g)\leq Y(g)\leq A(g)\ .
\end{equation}
\end{corollary}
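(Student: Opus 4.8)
The plan is to deduce the degree claim directly from Proposition~\ref{proposicao1} and then to recognise the combined rule $Y(g)$ as an honest convex combination of $A(g)$ and $B(g)$, from which the two\--sided ordering is immediate.

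First I would dispatch the statement about the degree. Proposition~\ref{proposicao1} already guarantees that $Y(g)$ has degree at least $m+1$ as soon as $\mu_A\neq\mu_B$, so all that remains is to verify this nondegeneracy. Because $A(g)$ and $B(g)$ are companion, $\gamma_A=\mu-\mu_A$ and $\gamma_B=\mu-\mu_B$ carry opposite signs; were $\mu_A=\mu_B$ we would have $\gamma_A=\gamma_B$, contradicting the opposite signs. Hence $\mu_A\neq\mu_B$, the hypothesis \eqref{prop1A} holds, and the degree conclusion follows at once.

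Next I would address the ordering. Setting $\lambda=(\mu-\mu_B)/(\mu_A-\mu_B)$, formula \eqref{prop1B} reads $Y(g)=\lambda\,A(g)+(1-\lambda)\,B(g)$, and a one\--line computation shows $\lambda+(1-\lambda)=1$, so the two weights always sum to one. The heart of the matter is to show that each weight is in fact positive, i.e.\ that $\lambda\in(0,1)$. Here the hypothesis $\mu_B<\mu<\mu_A$ or $\mu_A<\mu<\mu_B$ --- which is merely the companion condition of Definition~\ref{def4} restated --- does the work: in the first case the numerator $\mu-\mu_B$, the complementary numerator $\mu_A-\mu$, and the common denominator $\mu_A-\mu_B$ are all strictly positive, while in the second case all three are strictly negative. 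Either way $\lambda$ and $1-\lambda$ are quotients of like\--signed quantities, hence strictly positive, so $\lambda\in(0,1)$.

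Finally, having expressed $Y(g)$ as a strict convex combination of the two real numbers $A(g)$ and $B(g)$, I would invoke the elementary fact that such a combination lies between its endpoints, $\min\{A(g),B(g)\}\leq Y(g)\leq\max\{A(g),B(g)\}$. Unwinding this according to whether $A(g)\leq B(g)$ or $B(g)\leq A(g)$ yields exactly the alternative \eqref{prop1D}. The only step demanding any care is the sign bookkeeping for $\lambda$ in the two cases; everything else is a direct appeal to Proposition~\ref{proposicao1} and to the definition of a convex combination, so I would not expect a genuine obstacle here.
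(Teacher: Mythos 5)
Your proof is correct and follows essentially the same route as the paper: both verify $\mu_A\neq\mu_B$ from the companion hypothesis to invoke Proposition~\ref{proposicao1}, then show the two coefficients in \eqref{prop1B} are positive and sum to one, so that $Y(g)$ is a weighted (convex) mean of $A(g)$ and $B(g)$ and hence lies between them. The only cosmetic difference is that you treat both sign cases explicitly via $\lambda\in(0,1)$, whereas the paper works out the case ($A$ positive, $B$ negative) with $\alpha=\mu-\mu_B$, $\beta=\mu_A-\mu$ and declares the other case analogous.
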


\begin{proof}[Proof of Proposition \ref{proposicao1}]

\noindent
Both rules are exact for $\phi_j(t)= t^j$, with $j=0,1,\ldots, m$, that is, $A(\phi_j)= B(\phi_j)= I(\phi_j)$. Therefore,
$$
\begin{array}{ll}
Y(\phi_j)&=\displaystyle \frac{\mu-\mu_B}{\mu_A-\mu_B}\, A(\phi_j)+ \displaystyle \frac{\mu_A-\mu}{\mu_A-\mu_B}\, B(\phi_j), \\
\\
& = \displaystyle \frac{\mu-\mu_B+\mu_A-\mu}{\mu_A-\mu_B}\,\, I(\phi_j)= I(\phi_j)\ .
\end{array}
$$
So, the rule $Y(g)$ has degree at least $m$. It remains to show  that this rule is exact for $\phi_{m+1}(t)$ which implies that its degree is at least $m+1$.
 From \eqref{prop1A}, we have
$$
\begin{array}{ll}
Y(\phi_{m+1})&=\displaystyle \frac{\mu-\mu_B}{\mu_A-\mu_B}\, \mu_A+ \displaystyle \frac{\mu_A-\mu}{\mu_A-\mu_B}\, \mu_B, \\
\\
& = \displaystyle \frac{\mu \mu_A-\mu\, \mu_B}{\mu_A-\mu_B}=\mu= I(\phi_{m+1})\ .
\end{array}
$$
\end{proof}
\begin{proof}[Proof of Corollary \ref{cor1}]

\noindent
We assume that $A(g)$ is positive and $B(g)$ is negative being the proof in the other case completely analogous. 
By definition of sign of a rule, we have $\mu>\mu_A$ and $\mu<\mu_B$, i.e.
$$
\mu_A<\mu<\mu_B\, .
$$
Let $\alpha=\mu-\mu_B<0$ and $\beta =\mu_A-\mu<0$ which implies  $\alpha+\beta =\mu_A-\mu_B<0$. The rule $Y(g)$ can be written as
$$
Y(g)= \displaystyle \frac{\alpha A(g)+\beta \, B(g)}{\alpha+\beta},
$$
where
$$
c_1=\displaystyle \frac{\alpha}{\alpha+\beta}>0\quad\mbox{and}\quad c_2=\displaystyle \frac{\beta}{\alpha+\beta}>0,
$$
So, the rule $Y(g)$ is a linear combination of the rules $A(g)$ and $B(g)$ with positive coefficients $c_1$ and $c_2$.  Consequently, the inequalities
in \eqref{prop1D} hold, and the rule $Y(g)$ is a {\em weighted mean} of the companion rules $A(g)$ and $B(g)$.
\end{proof}

\begin{example}{($S=2/3\, M+ 1/3\, T$)}\label{exemplosimpson}

\medskip
\noindent
The well-konown midpoint and trapezoidal rules are, respectively,
\begin{equation}\label{mid}
M(g)= 2\, g(0),\qquad
T(g)= g(-1)+ g(1).
\end{equation}
For $\phi_2(t)=t^2$, the principal moment is  
$$
\mu=\displaystyle \int_{-1}^1 \phi_2(t)\, dt= 2/3\ . 
$$
Since $M(\phi_2)=0$ and $T(\phi_2)=1$, we have
$$
\gamma_{M}= \mu -M(\phi_2)= 2/3>0\quad \mbox{and}\quad 
\gamma_T=\mu -T(\phi_2)= -1/3<0 \ .
$$
Thus, by Definition~\ref{def3}, $M(g)$ is positive, while $T(g)$ is negative. Moreover, as $\gamma_{M}\ . \gamma_{T}<0$,  the rules $M$ and $T$ are companion rules (see Definition~\ref{def4}).

\noindent
Let us show that the linear combination \eqref{prop1B} of these two rules of degree 1 coincides with the Simpson rule, which is a rule of degree $m=3$.

\medskip

\noindent
As $\mu_M= M(  \phi_2  )=0$ and $\mu_T=T(\phi_2)=2$, from \eqref{prop1B} it follows 

$$
\begin{array}{ll}
Y(g)&=\displaystyle{\frac{\mu-\mu_T}{\mu_M-\mu_T} M(g)+\frac{\mu_M-\mu}{\mu_M-\mu_T} T(g)}= \displaystyle \frac{2}{3}\, M(g)+ \displaystyle \frac{1}{3}\, T(g).
\end{array}
$$
Taking into account the expressions \eqref{mid}, we get
$$
Y(g)= \displaystyle \frac{1}{3}\left[ g(-1)+ 4\, g(0)+ g(1) \right],
$$
which coincides with Simpson rule $S(g)$ (see next example).
\end{example}

\begin{example} (A combined rule of degree 5 using the Simpson rule )\label{exemplo2}

\medskip
\noindent
One easily verifies  that in $[-1,1]$ the (open) rule
$$
A(g)=g\left(-\displaystyle \frac{\sqrt 3}{3}\right)+ g\left(\displaystyle \frac{\sqrt 3}{3}\right),
$$
and the Simpson rule (closed)
$$
S(g)=\displaystyle \frac{1}{3} \left( g(-1) + 4\, g(0)+ g(1)\right),
$$
are both of degree $3$. Let us show that they are companion rules, and verify that the respective combined rule \eqref{prop1B} has degree  $m=5$.

\noindent
 The principal moment is $\mu=\displaystyle \int_{-1}^1 t^4 \, dt =2/5$, and
$$
\begin{array}{l}
\mu_A=A(t^4)=2/9\quad \Longrightarrow\quad \gamma_A=\mu-A(\phi_4)=8/45>0   \\
\mu_S=S(t^4)=2/3   \quad \Longrightarrow \quad \gamma_S=\mu-S(\phi_4)=-4/15<0,   \\
\end{array}
$$
and so  $A(f)$ is positive while  $S(f)$ is negative. We have $\mu_A- \mu_S=2/9-2/3=-4/9$ and the rule $Y(g)$ has the form
\begin{equation}\label{nna}
Y(g)=\displaystyle \frac{\gamma_S}{\mu_A-\mu_S}\, A(g)- \displaystyle \frac{\gamma_A}{\mu_A-\mu_S}\, S(g)= \displaystyle \frac{3\, A(g)+ 2 \,S(g)}{5}.
\end{equation}
The explicit expression of the weighted mean \eqref{nna} is
\begin{equation}\label{nna1}
Y(g)=\displaystyle \frac{1}{15}\left[2\, g(-1)+ 9\, g(-\sqrt{3}/3)+ 8\, g(0)+ 9\, g(\sqrt{3}/3) + 2 \, g(1)\right] \ .
\end{equation}

\medskip
\noindent
The rule $Y(g)$ has degree $m=5$ since
$$
\begin{array}{l}
Y(\phi_4)=2/5\quad \mbox{and}\quad I(\phi_4)=2/5,\\
Y(\phi_5)=0\quad \mbox{and}\quad I(\phi_5)=0,\\
Y(\phi_6)=14/45\quad \neq \quad I(\phi_6)=2/7\ .\\
\end{array}
$$
The result  in \eqref{nna1}  shows the dependence of $Y(g)$  on 5 nodes. In paragraph \ref{3pontos} we will construct another rule of degree 5  using only 3 nodes.

\medskip
\noindent
In computational terms, given two rules $A(g)$ and $B(g)$  one does not need to use the expression of  the rule $Y(g)$ in terms of the nodes like in \eqref{nna1} but just the linear combination \eqref{nna}.  

\medskip
\noindent
In order to observe the numerical improvement one can get  passing from a pair of companion rules to the respective combined rule $Y(g)$, let us approximate the following integral which will be used as a test model in the subsequent examples:
$$
I(g)=\displaystyle \int_{-1}^1 \displaystyle \frac{2}{1+t^2} \,dt=\pi \ .
$$
For the first  rule we obtain
$A(g)=3$, for the Simpson rule $S(g)=10/3\simeq 3.333\ldots$ and for the combined rule $Y(g)= \left(3\, A(g)+ 2 \,S(g)\right)/5=47/15\simeq 3.1333\ldots$. Thus, the respective errors are
$$
E_A(g)=I(g)-A(g)\simeq 0.14,\quad E_S(g)=I(g)-S(g)\simeq -0.19,
$$
while $E_Y(g)=I(g)-Y(g)\simeq 0.008.$ Once computed $A(g)$ and $S(g)$ the cost for computing $Y(g)$ is only  an addition, two multiplications and a division. In this example an $\simeq 5 \verb+%+$  relative error  in $A(g)$ and $S(g)$ yields to a relative error of $\simeq 0.3 \verb+%+$ of the rule $Y(g)$, 
meaning that the combined rule is approximately 15 times more accurate than the two referred companion rules.
\end{example}

\subsection{Families of companion two\--point rules}

\noindent
Let $t_0, t_1$ be two distinct nodes belonging to  $[-1, 1]$, and  the two\--point quadrature rule
\begin{equation}\label{eq10}
Q(g)= A_0\, g(t_0)+ A_1 \, g(t_1),
\end{equation}
where the parameters $A_0$, $A_1$ will be determined in order  that the rule has a degree of precision at least 1.

\medskip
\noindent
Facts:

\begin{enumerate}
\item[(i)] There are infinite choices of points $(t_0,t_1)$ in the square region $D=[-1,1]\times [-1,1]$, for which  the rule $Q(g)$ (considered as a function of $(t_0,t_1)$)  is either positive or negative and  simultaneously has   degree 1;
\item[(ii)] There are more positive rules than negative ones;
\item[(iii)]  The points $(t_0,t_1)\in D$ for which  $Q(g)$ has exact degree $m=2$, lie on   the hyperbole ${\cal L}_2$,
 \begin{equation}\label{hip1}
{\cal L}_2:\qquad   1/3 +  \,t_0 \,t_1=0.
 \end{equation}
\item[(iv)] There is exactly one (two-point)  rule $Q(g)$ with degree $m=3$, namely for $t_0=-\sqrt{3}/3$,  $t_1=\sqrt{3}/3$,  that is given by
\begin{equation}\label{hip2}
Q(g)=g\left(   -\displaystyle \frac{\sqrt{3}}{3}\right)  + g\left( \displaystyle \frac{\sqrt{3}}{3} \right). 
\end{equation}
\end{enumerate}

   \begin{figure}[hbt] 
\begin{center}
  \includegraphics[scale=0.420]{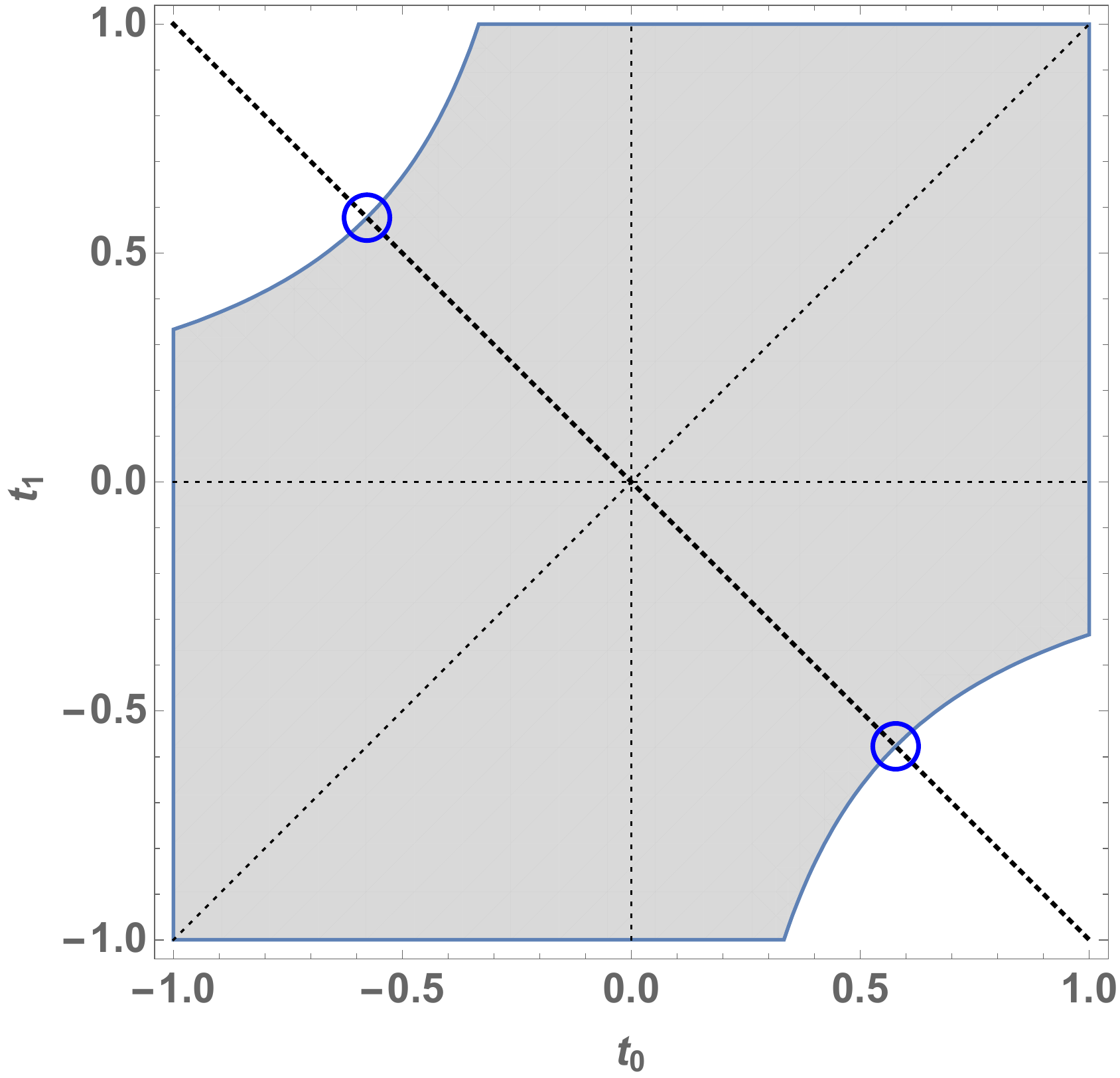} 
 \caption{Rules of degree 1: positive (gray) and negative (white). \label{fig1}}
\end{center}
\end{figure}

\medskip
\noindent
In Figure \ref{fig1} the points in the square region $D$ for which  the rule $Q(g)$ is  either positive or negative are  displayed in gray and white respectively.  The boundary of this region contains points of the hyperbole ${\cal L}_2$ in \eqref{hip1}.  For  $(t_0,t_1)$ belonging to the hyperbole  ${\cal L}_2$ the  rule $Q(g)$ has exactly degree  one, meaning that there is an infinite number of rules of $1$\--degree  each one with nodes $t_0, t_1$ for which  the point $(t_0,t_1)$ belongs to one of the two branches of the hyperbole  ${\cal L}_2$ in Figure \ref{fig1}. Moreover, one immediately sees that the two\--point closed Newton\--Cotes rule ($t_0=-1$ and $t_1=1$) is a negative rule of order one, as it was assumed in Example~\ref{exemplosimpson}. It is also worth  to recall that  any closed Newton\--Cotes rule of any degree $m>1$ is negative as well, whereas the open Newton-Cotes rules are all positive.

\medskip
\noindent
The mathematical aspects behind the facts $(i)$-$(iv)$ above and the geometry  of figures \ref{fig1}-\ref{fig2} can be explained as follows.

\noindent
Instead of the canonical polynomial basis of monomials of degree $\leq 3$, we consider the basis
$$
\begin{array}{l}
\Psi_0(t)=1\\
\Psi_1(t)=(t-t_0)\\
\Psi_2(t)=(t-t_0)\, (t-t_1)\quad \mbox{(nodal polynomial)}\\
\Psi_3(t)=(t-t_0)^2\, (t-t_1) \ . \\
\end{array}
$$
Imposing the condition that the rule $Q(f)$ is of degree at least 1, the parameters $A_0,A_1$ in \eqref{eq10} are computed. That is, considering
 $Q(\Psi_0)= I(\Psi_0)$ and 
$Q(\Psi_1)= I(\Psi_1)$, we obtain the following  triangular system in the unknowns $A_0,A_1$:
$$
\left\{
\begin{matrix}
A_0&+&A_1&= 2\\
  &&(t_1-t_0)\, A_1&= -2\, t_0,
  \end{matrix}
\right.
$$
   
   \begin{figure}[hbt] 
\begin{center}
  \includegraphics[scale=0.420]{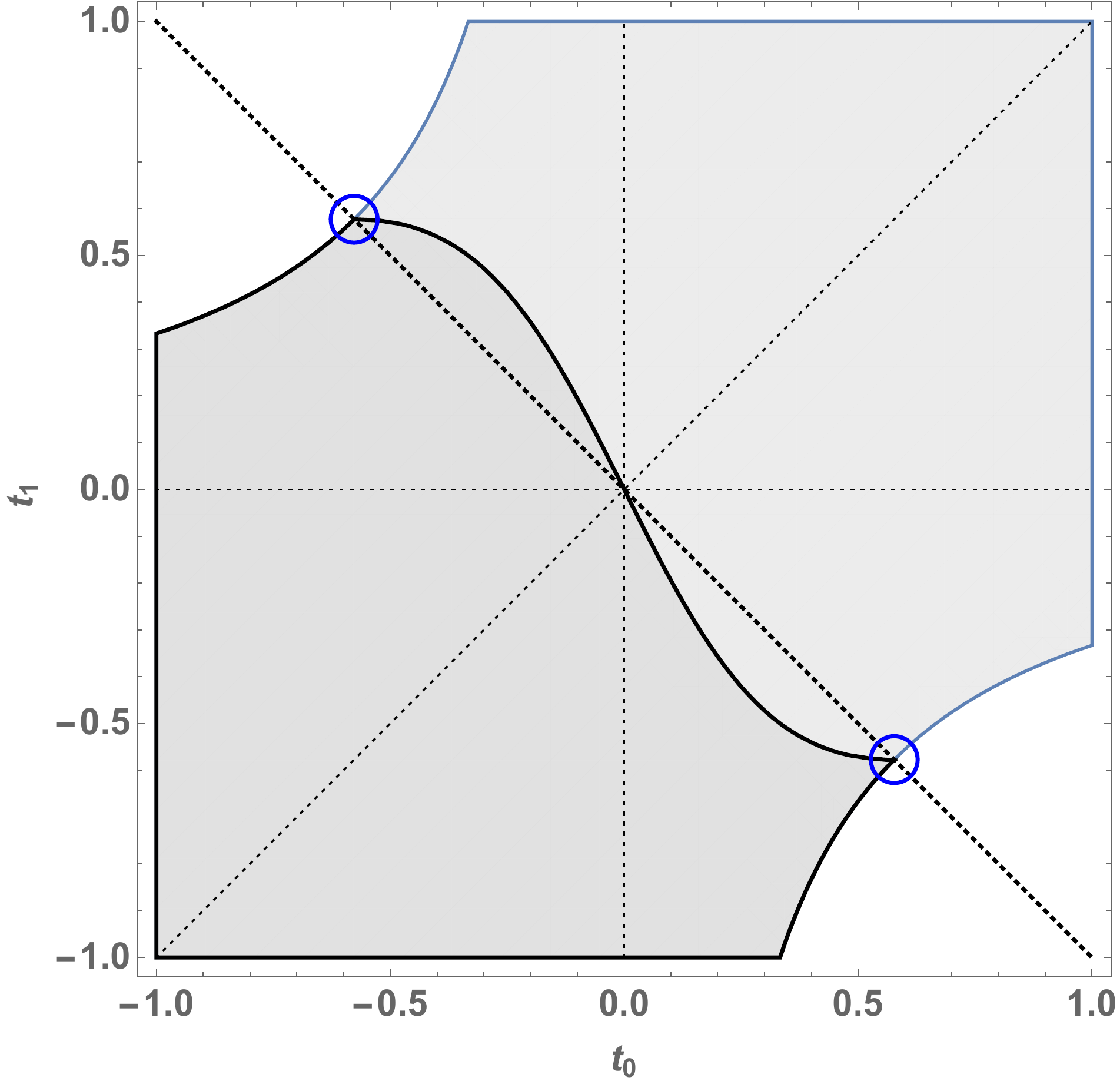} 
 \caption{Rules of degree $\geq 2$: positive (dark gray) and negative (light gray). \label{fig2}}
\end{center}
\end{figure}

\noindent
The solution of this system  is $A_1=-2\, t_0/(t_1-t_0)$ and $A_0= 2\, t_1/(t_1-t_0)$. Consequently, the two\--point rule
$$
Q(g)= \displaystyle \frac{2\, t_1}{t_1-t_0}\, g(t_0)-\displaystyle \frac{2\, t_0}{t_1-t_0}\, g(t_1), \qquad t_1\neq t_0
$$
has degree at least one. 

\medskip
\noindent
The rule applied to the nodal polynomial $\Psi_2(t)$ gives $Q(\Psi_2)=0$, whereas the moment   $I(\Psi_2)= 2/3 + 2 \,t_0\, t_1$. So, 
$$
 \gamma_Q=\mu- Q(\Psi_2) =\mu=\displaystyle \frac{2}{3}+ 2 \,t_0\, t_1\ .
 $$
Thus, the rule is of degree one and positive for the points $(t_0,t_1)$ displayed in gray color in Figure \ref{fig1} and negative for the white points.

\begin{remark}\label{rem1}
 One advantage of the polynomial basis $\Psi_0(t),\Psi_1(t), \ldots$ is that from the nodal polynomial onwards the rule is null and so the  value of the parameter $\gamma_Q$ coincides with $\mu=I(\Psi_3)$. This is the reason why we call $\mu$ the principal moment of the rule which also gives the  sign of a rule in the sense of Definition \ref{def3}.
\end{remark}

\medskip
 \noindent
The rule $Q(g)$ has degree $\geq 3$ if and only if \eqref{hip1} holds and it has null  moment $\mu=\displaystyle \int_{-1}^1 \Psi_3(t)\, dt $. The rule has degree 2 and is positive when $\mu>0$ and negative otherwise. The condition $\mu=0$ is given by the equation
\begin{equation}\label{pol4}
{\cal L}_3:\qquad  \displaystyle \frac{4\, t_0}{3}+ \displaystyle \frac{2\, t_1}{3} + 2\, t_0^2\, t_1 =0\ .
\end{equation}
 In Figure \ref{fig2}, the points  for which $\mu>0$ (positive rule of degree 2) are displayed in dark gray and in light gray the points  with  $\mu<0$ (negative rule). The boundary of the respective domains is shown in dark bold. 
This boundary is the algebraic curve  ${\cal L}_3$ defined by a cubic polynomial  as in \eqref{pol4}.

\medskip
\noindent
The intersection points of the curves ${\cal L}_2$ and ${\cal L}_3$ are symmetrically distributed with respect to the origin and are given by   $t_0=\pm \sqrt{3}/3$, with $t_1=-t_0$.  In  figures~\ref{fig1}-\ref{fig2} these intersection points are the center of the circles in blue.

\medskip
\noindent
 This fully justify the  facts $(i)$-$(iv)$.  Analogous procedures enable us to obtain  the geometry of a family of rules with  three distinct nodes.  Indeed, following the same lines as above, we will obtain a famous (positive) rule of degree 5, known as Gauss\--Legendre rule (see for instance \cite{brass}, Ch. 6) with three nodes, which might be be combined with the 5 degree rule $Y(g)$ given in Example \ref{exemplo2}.

\subsection{Families of companion three\--point rules}\label{3pontos}

In $D=\left\{(t_0,t_2,t_3):\,  -1\leq t_0\leq 1, \,  -1\leq t_1\leq 1,\, -1\leq t_2\leq 1 \right\}$, we consider the set of 3\--point quadrature rules
$$
Q(g)= A_0\, g(t_0)+ A_1\, g(t_1)+ A_2\, g(t_2),\quad\mbox{where}\quad t_0\neq t_1\neq t_2\ .
$$
Firstly, we obtain the weights $A_0, A_1, A_2$ in order that all the rules in the set have degree 2 and are either positive or negative. This will enable us to find candidates to pairs of companion rules of degree 2. 

\medskip
\noindent
The weights $A_i$ can be written as functions $A_i=\Omega_i: D\subset\Rb^3\mapsto \Rb$, for $i=0,1,2$. In fact, taking the polynomials
$$
\begin{array}{l}
\Psi_0(t)=1\\
\Psi_1(t)=t-t_0\\
\Psi_2(t)=(t-t_0)\, (t-t_1),
\end{array}
$$
 the rule has degree $\geq 2$ if and only if it is exact for $\Psi_0,\Psi_1$ and $\Psi_2$. That is, the weights are  solution to the triangular system
$$
\left\{
\begin{matrix}
A_0&+&A_1&+&A_2&=I(\Psi_0)\\
  &&(t_1-t_0)\, A_1&+& (t_2-t_0)\, A_2&= I(\Psi_1)\\
  &&  && (t_2-t_0)\, (t_2-t_1)\, A_2&=  I(\Psi_2).\\
  \end{matrix}
\right.
$$
Since we are assuming $t_0\neq t_1\neq t_2$, this system has a unique solution,
\begin{equation}\label{unique1}
\begin{array}{l}
A_2= \Omega_2(t_0,t_1,t_2)= \displaystyle \frac{ I(\Psi_2)}{(t_2-t_0)\, (t_2-t_1)}\\
\\
A_1= \Omega_1(t_0,t_1,t_2)= \displaystyle \frac{ I(\Psi_1)- (t_2-t_0)\, A_2}{(t_1-t_0)\, (t_2-t_1)}\\
\\
A_0= \Omega_0(t_0,t_1,t_2)= 2-(A_1-A_0)\ . \\
\end{array}
\end{equation}
Thus,
\begin{equation}\label{unique2}
Q(g)= \Omega_0(t_0,t_1,t_2)\, g(t_0)+\Omega_1(t_0,t_1,t_2)\, g(t_1)+\Omega_2(t_0,t_1,t_2)\, g(t_2),
\end{equation}
where $\Omega_i$ are given by \eqref{unique1}. After some simplifications the expressions in \eqref{unique1} become
\begin{equation}\label{unique1A}
\begin{array}{ll}
 \Omega_0(t_0,t_1,t_2)&= \displaystyle \frac{2\, (1+ 3\, t_1\, t_2)}{3\, (t_1-t_0)\, (t_2-t_0)}\\
 \\
  \Omega_1(t_0,t_1,t_2)&=- \displaystyle \frac{2\, (1+ 3\, t_0\, t_2)}{3\, (t_1-t_0)\, (t_2-t_1)}\\
  \\
    \Omega_2(t_0,t_1,t_2)&=\displaystyle \frac{2\, (1+ 3\, t_0\, t_1)}{3\, (t_2-t_0)\, (t_2-t_1)}\ .\\
  \end{array}
\end{equation}

\medskip
\noindent
Now, let us consider the polynomials
$$
\begin{array}{l}
\Psi_3(t)=(t-t_0)\, (t-t_1)\, (t-t_2)\quad \mbox{(nodal polynomial)}\\
\Psi_4(t)=\Psi_3(t)\, (t-t_0)\\
\Psi_5(t)=\Psi_4(t)\, (t-t_1).
\end{array}
$$

\begin{remark}\label{rem2}
\noindent
{\em 
Note that the rule \eqref{unique2} is null when applied to $\Psi_j$, for $j\geq 3$, and so the degree of the rule is the same of the first index $j\geq 3$ for which
$I(\Psi_j)\neq 0$. 
}
\end{remark}

   \begin{figure}[hbt] 
\begin{center}
  \includegraphics[scale=0.420]{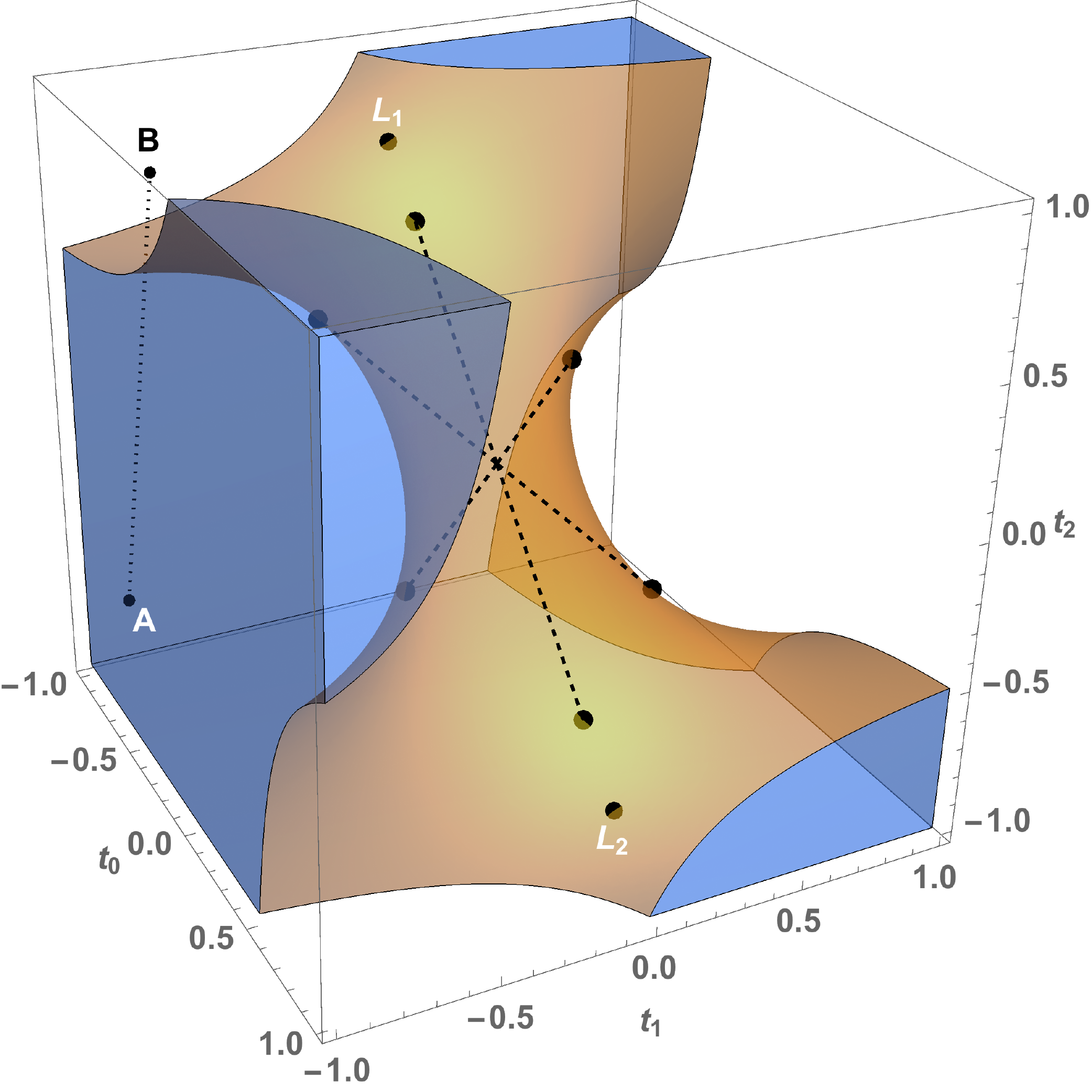} 
 \caption{Region  giving positive rules of degree 2. The symmetrical 6 points
 $\pm(0,\sqrt{3}/3, -\sqrt{3}/3)$ , $\pm(\sqrt{3}/3,0, -\sqrt{3}/3)$, $\pm(\sqrt{3}/3, -\sqrt{3}/3,0)$ belong to the boundary of the region. 
 For points A and B see Example \ref{exemplo3}. For the points $L_1$ and $L_2$   see Example \ref{exemplo5}.\label{fig3}}
\end{center}
\end{figure}

\medskip
\noindent
By construction, the rule $Q(g)$ has order $\geq 2$. Therefore, for any polynomial of the vector space ${\cal P}_2$ (polynomials of degree $\leq 2$) the rule is exact. In particular, it is exact for the elements of the canonical basis of ${\cal P}_2$, that is for $\phi_j(t)=t^j$, with $0\leq j\leq 2$. 
 
\noindent Since $I(\phi_3)=\displaystyle \int_{-1}^1 t^3\, dt =0$, we have $\gamma_3= - Q(\phi_3)$, where
$
\gamma_j=I(\phi_j)-Q(\phi_j).
$
Thus,  $Q(g)$ has order exactly 2 if $Q(\phi_3)\neq 0$ and order $\geq 3$ if $Q(\phi_3)=0$.

\medskip
\noindent
Let $P$ be the following polynomial 
\begin{equation}\label{unique3}
P(t_0,t_1,t_2)= t_0^3\, \Omega_0(t_0,t_1,t_2)+ t_1^3\, \Omega_1(t_0,t_1,t_2)+ t_2^3\, \Omega_2(t_0,t_1,t_2),
\end{equation}
where $\Omega_i$ are given by \eqref{unique1A}.

\medskip
\noindent The rule \eqref{unique2} is:

\begin{enumerate}
\item[(i)] positive if $P(t_0,t_1,t_2)<0$;
\item[(ii)]
 negative if $P(t_0,t_1,t_2)>0$;
\item[(iii)] of degree $\geq 3$ if $(t_0,t_1,t_2)$ is a root of the polynomial equation $P(t_0,t_1,t_2)=0$. 
\end{enumerate}

\medskip
\noindent
Specific instances of the rule \eqref{unique2} with weights \eqref{unique1A} are given in the following example.
 
 \begin{example}\label{exemplo3}
 
 \noindent
 Let $t_0=-15/16, t_1=-7/8$, $t_2=-3/4$
  (see Figure \ref{fig3} where the point $A$ has coordinates\footnote{After defining the polynomial expression \eqref{unique3}, the coordinates of the point A have been found using the predicate $\{P(t_0,t_1,t_2)<0, -1\leq t_0\leq 1, -1\leq t_1\leq 1, -1\leq t_2\leq 1, t_0\neq t_1\neq t_2\}$ as argument to the {\sl Mathematica} \cite{wolfram1} command FindInstance.}  $(t_0,t_1,t_2)$) and $P$ as in \eqref{unique3}. Computing the $\Omega_i$'s, given by \eqref{unique1A} we get  $P(t_0,t_1,t_2)<0$. Therefore the rule
$$
 A(g)= \displaystyle \frac{2}{9}\left[  760\, g(-15/16)- 1194\, g(-7/8)+ 443\, g(-3/4)  \right],
 $$
 is positive, with degree 2.
 
 \medskip
\noindent
Considering now $t_0=-3/4, t_1=-7/8$ and $t_2=-3/4$   (point $B$ in Figure \ref{fig3}) we have $P(t_0,t_1,t_2)>0$. Thus, the rule
$$
 B(g)= \displaystyle \frac{1}{117}\left[  95\, g(3/4)- 264\, g(-7/8)+ 403\, g(-3/4)  \right],
 $$
 is negative, with degree 2.
Taking  $\phi_3(t)=t^3$, the coefficients \eqref{prop1B} of the combined rule are:
 $$
 \begin{array}{l}
 \mu=I(\phi_3)=0\\
 \mu_A=A(\phi_3)= -2257/786\quad \mbox{(as $\mu-\mu_A>0$ the rule $A(g)$ is positive)}\\
  \mu_B=B(\phi_3)= 2\,975/19\, 984\quad \mbox{(as $\mu-\mu_B<0$ the rule $B(g)$ is negative)}\ .
 \end{array}
 $$
 As  $A(g)$ and $B(g)$ have errors of opposite sign they are companion rules (see Definition~\ref{def4}). The combined rule is
$$
 Y(g)= \displaystyle \frac{2\, 975}{32\, 316}\, A(g)+ \displaystyle \frac{29\,341}{32\,316}\, B(g),\\
 $$
which coincides with the (open) 4\--point rule
$$
 Y(g)=\displaystyle \frac{4522000\, g(-15/16)- 7659522\, g(-7/8) + 3545421\, g(-3/4)+ 
 214415\, g(3/4)}{290844}\ .
$$
It can be verified that $Y(\phi_j)= Y(\phi_j)$, for $j=0,1,2,3$, and
  $$I(\phi_4)- Y(\phi_4)=-58075361/220610560<0\ .$$Thus the combined rule $Y(g)$ is negative and of degree $m=3$.
 
 \medskip
 \noindent
  As an exercise, the interested reader can verify that for any of the $6$ points referred in Figure \ref{fig3} the corresponding rule $Q(g)$ in \eqref{unique2} has degree   3. Thus, one may conclude that there exists an infinite set of rules of the referred type which are negative and of  degree 3. Consequently, as all rules belonging to the set
  of open Newton\--Cotes rules,  ${\cal N}{\cal C}$,  are positive, so they are good candidates to use in pairs of companion rules in order to obtain combined rules of arbitrary order.
   \end{example}

  \begin{example}{($3$\--point Gauss\--Legendre  rule of degree 5)}\label{exemplo5}
  
  \medskip
  \noindent
   A simple choice of nodes $t_0,t_1$ and $t_2$ for the weights \eqref{unique1A} of the rule \eqref{unique2} is $t_1=0$, and $t_2=-t_0$, giving
 \begin{equation}\label{g1}
Q(g)=\displaystyle \frac{1}{3\,  t_0^2}\, g(t_0)  +\displaystyle \frac{3\, t_0^2-1}{3\,  t_0^2}\, g(0)  +\displaystyle \frac{1}{3 \,t_0^2}   \,g(-t_0)\ .
 \end{equation}

 \begin{table}[htbp]
   \tabcolsep=0.10cm
   \hspace{4cm}
   $$
 \begin{array}{| c |   c | c| c|}\hline
j & Q(\phi_j) & I(\phi_j) & \gamma_j=I(\phi_j)- Q(\phi_j) \\
\hline
3&0&0&0\\
\hline
4& 2\, t_0^2/3& 2/5& 2/15\, \left(  3-5\, t_0^2 \right)\\
 \hline
 5&0&0&0\\
\hline
6&2\, t_0^4/3&2/7& 2/21\, \left(  3-7\, t_0^4 \right)\\
\hline
 \end{array}
$$
 \caption{Rule \eqref{unique2} for $t_1=0$ and  $t_2=-t_0$. \label{tabela1} }
 \end{table}

\noindent
For the polynomials $\phi_j(t)=t^j$, with $j\geq 3$,  Table \ref{tabela1} displays the errors $\gamma_j=I(\phi_j)-Q(\phi_j)$.  The rule has degree  $\geq 4$ if and only if $\gamma_4=0$, that is, for $t_0=\sqrt{3/5}$. In this case the rule in \eqref{g1} becomes
 \begin{equation}\label{g2}
Q(g)=\displaystyle \frac{5}{9} \, g\left(-  \displaystyle \sqrt{\frac{3}{5}}   \right) +\displaystyle \frac{8}{9}\, g(0)  +\displaystyle \frac{5}{9} \, g\left(  \displaystyle \sqrt{\frac{3}{5}}   \right)\ .
 \end{equation}
 Taking into account the values of $\gamma_5=0$ and $\gamma_6>0$, the rule  \eqref{g2} has degree 5 and is positive. This is the  Gauss\--Legendre rule with 3 nodes.  In Figure \ref{fig3}, 
 the points $L_1$ and $L_2$ correspond to the nodes $(t_0, t_1,t_2)=\pm ( -\sqrt{3/5},0, \sqrt{3/5})$ for which the rule has degree 5.

  \end{example}
\section{The mean transformation rule $W(g)$}\label{mean}
 
 \noindent
In what follows,  ${\cal Q}$ denotes the  set of rules with degree $m\geq 0$. In Proposition~\ref{proposicao1}, a new rule has been  assigned to a pair of companion rules belonging to ${\cal Q}$.  In order to  generalize this scheme, let us  define a transformation
$$
\begin{array}{ll}
W:& {\cal Q}\times {\cal Q} \rightarrow Q\\
&(A,B)\mapsto W(g),
\end{array}
$$
where $W(g)$ will enjoy  analogous  properties of the linear combination $Y(g)$ in  \eqref{prop1B},  in the sense that the rule $W(g)$ has degree greater than those of the arguments $A(g)$ and $B(g)$. The rule $W(g)$ will be called {\em mean rule}.

\medskip
\noindent
As before, for a given rule $Q(g)\in{\cal Q}$, we compute the quantities $\mu$ and $\mu_Q$, defined by
\begin{equation}\label{m1}
\mu=\int_{-1}^1 \phi_{m+1} (t)\, dt\quad \mbox{and} \quad \mu_Q= Q(\phi_{m+1}),
\end{equation}
where $\phi_j(t)=t^j, \,\, j=0,1,\ldots$.

\begin{definition}{(Mean rule)}\label{defmean}

\medskip
\noindent
Let  ${\cal Q}$ be the  set of rules of degree $m\geq 0$ and  $A(g), B(g)$  belonging to ${\cal Q}$.   The mean rule of $A(g)$ and $B(g)$ is
\begin{equation}\label{m2}
\begin{array}{ll}
W(g)& = \left\{
\begin{array}{ll}
\displaystyle \frac{A(g)+B(g)}{2},& \mbox{if}\quad \mu_A=\mu_B\\
\\
\displaystyle \frac{(m+2)\, \mu_B-2}{(m+2)\, (\mu_B-\mu_A)}\, A(g)+ \displaystyle \frac{2- (m+2)\, \mu_A}{(m+2)\, (\mu_B-\mu_A)}\, B(g), & \mbox{if}\quad \mu_A\neq \mu_B\\
\end{array}
\right.
\end{array}
\end{equation}
 where $\mu_A$ and $\mu_B$ are as in \eqref{m1}.
\end{definition}
 
 \begin{proposition}\label{prop2}
 The mean rule \eqref{m2} has degree at least $m+1$.
 \end{proposition}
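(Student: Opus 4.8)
The plan is to handle the two branches of Definition~\ref{defmean} separately, the guiding observation being that the $\mu_A\neq\mu_B$ branch is nothing but the combined rule $Y(g)$ of Proposition~\ref{proposicao1} written in a different notation. The only ingredient needed to see this is the value of the principal moment recorded in \eqref{pm}, namely $\mu=2/(m+2)$, equivalently $(m+2)\,\mu=2$. First I would substitute $2=(m+2)\,\mu$ into the numerators of the two weights appearing in the second branch of \eqref{m2}.

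After this substitution the weight of $A(g)$ collapses to
$$
\frac{(m+2)\,\mu_B-2}{(m+2)\,(\mu_B-\mu_A)}
=\frac{(m+2)\,(\mu_B-\mu)}{(m+2)\,(\mu_B-\mu_A)}
=\frac{\mu-\mu_B}{\mu_A-\mu_B},
$$
and the identical manipulation turns the weight of $B(g)$ into $(\mu_A-\mu)/(\mu_A-\mu_B)$. These are exactly the coefficients in \eqref{prop1B}, so for $\mu_A\neq\mu_B$ one has $W(g)=Y(g)$ verbatim, and Proposition~\ref{proposicao1} immediately hands over the conclusion that $W(g)$ has degree at least $m+1$. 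I regard this identification as the core of the argument, and the cancellation above as purely routine once $(m+2)\,\mu=2$ is in hand.

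The remaining branch $\mu_A=\mu_B$, where $W(g)=\frac{1}{2}\,(A(g)+B(g))$, is where I expect the real difficulty. Exactness through degree $m$ is immediate: since $A$ and $B$ both reproduce $I(\phi_j)$ for $0\le j\le m$, so does their average by linearity. The delicate step is the promotion from $m$ to $m+1$, because $W(\phi_{m+1})=\frac{1}{2}(\mu_A+\mu_B)=\mu_A$, which agrees with $\mu=I(\phi_{m+1})$ only when $\mu_A=\mu_B=\mu$ --- a coincidence excluded by the standing hypothesis that $A$ and $B$ have degree exactly $m$. I would therefore emphasize that the genuine gain of one degree takes place in the $\mu_A\neq\mu_B$ branch treated above, and interpret $\mu_A=\mu_B$ as the degenerate situation in which the two rules carry a common error on $\phi_{m+1}$ that averaging cannot annihilate; the substantive assertion of the proposition is thus the reduction to Proposition~\ref{proposicao1}, and this is the single step I would flag as requiring care.
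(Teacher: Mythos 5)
Your treatment of the branch $\mu_A\neq\mu_B$ is correct and is, in substance, the paper's own argument: the paper writes $W(g)=\alpha\,A(g)+\beta\,B(g)$ and solves the $2\times 2$ system $\alpha+\beta=1$, $\mu_A\,\alpha+\mu_B\,\beta=\mu$ by undetermined coefficients, which yields exactly the weights you obtain by substituting $2=(m+2)\,\mu$ from \eqref{pm} and recognizing the coefficients of \eqref{prop1B}. Your route is marginally cleaner in that it reuses Proposition~\ref{proposicao1} rather than redoing the linear algebra, but both arguments hinge on the same normalization $(m+2)\,\mu=2$; note in passing that this normalization, taken from \eqref{pm}, silently presumes $m+1$ even (otherwise $I(\phi_{m+1})=0$), a convention the paper adopts throughout.

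On the branch $\mu_A=\mu_B$ you have caught a genuine flaw in the paper rather than left a gap in your own argument. The paper's proof asserts that, ``from \eqref{m3}'', one has $W(\phi_j)=\frac{1}{2}\left(A(\phi_j)+B(\phi_j)\right)=I(\phi_j)$ for $j=0,1,\ldots,m,m+1$; but \eqref{m3} only covers $j\leq m$, and at $j=m+1$ one gets $W(\phi_{m+1})=\frac{1}{2}(\mu_A+\mu_B)=\mu_A$, which differs from $\mu$ precisely because $A$ and $B$ have degree exactly $m$ in the sense of Definition~\ref{def2}. So in this branch the arithmetic mean has degree exactly $m$, the conclusion ``degree at least $m+1$'' fails, and the proposition is actually correct only under the hypothesis $\mu_A\neq\mu_B$, i.e.\ under condition \eqref{prop1A} of Proposition~\ref{proposicao1}. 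Your diagnosis --- a common error on $\phi_{m+1}$ that averaging cannot annihilate --- is exactly right; the only way to salvage the first branch would be $\mu_A=\mu_B=\mu$, which contradicts the standing assumption that both rules have degree $m$.
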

 \begin{proof}
 As the rules $A(g)$ and $B(g)$ have degree $m\geq 0$, we know that $A(\phi_j)=B(\phi_j)=0$, for odd $j$, and
\begin{equation}\label{m3}
 A(\phi_j)= B(\phi_j)=\displaystyle \int_{-1}^1\phi_j(t)\, dt= \displaystyle \frac{2}{j+1}, \quad j=0,2,4,\ldots, m\ .
\end{equation}
In the case $\mu_A=\mu_B$, from \eqref{m3}, it follows
$$
W(\phi_j)=\displaystyle \frac{A(\phi_j)+B(\phi_j)}{2}=I (\phi_j), \quad \mbox{for}\quad j=0,1,\ldots,m,m+1 .
$$
So, $W(g)$ has degree $\geq m+1$.

\medskip
\noindent
When $\mu_B-\mu_A\neq 0$, let us show that there exists a unique pair $(\alpha, \beta)$ such that
 $$
 W(g)=\alpha\, A(g)+\beta\, B(g), \quad \alpha, \beta\in \Rb,
 $$
 has degree $\geq m+1$.
 
 \medskip
 \noindent
For $j=0,1,\ldots, m,m+1$,  substituting $g$ in $W(g)$ by each $\phi_j(t)$  and applying \eqref{m3}, we obtain the linear system
$$
\left\{
\begin{matrix}
\alpha&+&\beta&=1\\
\mu_A\, \alpha&+ &\mu_B\, \beta&= \mu \ .
\end{matrix}
\right.
$$
 Since $\mu_A\neq \mu_B$ this system has the unique solution
 $$
 \alpha=\displaystyle \frac{(m+2)\, \mu_B-2}{(m+2)\, (\mu_B-\mu_A)}, \quad \mbox{and} \quad \beta=\displaystyle \frac{2- (m+2)\, \mu_A}{(m+2)\, (\mu_B-\mu_A)}\ .
 $$
 Thus,  $W(g)$ has degree at least $m+1$.
 \end{proof}
 
 \begin{example}\label{exemplo6}

 \noindent
We now construct the mean rule  of  \eqref{hip2} and the Simpson's rule (both rules of degree 3) and show that the mean rule has degree 5. The starting rules will be denoted by $A(g)$ and $S(g)$, respectively:
$$
A(g)=g(-\sqrt{3}/3)+g(\sqrt{3}/3),\qquad S(g)=1/3\, \left(g(-1)+ 4\, g(0)+ g(1)\right)\ .
$$
We have,
$$
 \mu=I(\phi_4)=2/5, \quad \mu_A= A(\phi_4)=2/9, \quad \mu_S=S(\phi_4)=2/3,\quad\mbox{and}\quad \mu_S-\mu_A=4/9\ .
$$
Thus,
\begin{equation}\label{m7}
  \begin{array}{ll}
 W(g)&= \displaystyle \frac{5\, \mu_S-2}{5\, (\mu_S-\mu_A)} A(g)+ \displaystyle \frac{2- 5\, \mu_A}{5\, (\mu_S-\mu_A)}\, S(g) = \displaystyle \frac{3}{5} A(g)+  \displaystyle \frac{2}{5} \, S(g)\\
 \\
 &= 2/15\, g(-1)+ 3/5\, g(-\sqrt{3}/3)+ 8/15\, g(0)+ 3/5\, g(\sqrt{3}/3)+2/15\, g(1)\ .
 \end{array}
\end{equation}
As $W(\phi_5)=I(\phi_5)=0$ the rule has degree $m=5$. 

\medskip
\noindent
We note that  $W(g)$ is a companion rule of the positive Gauss-Legendre rule \eqref{g2} since  $\gamma_6=I(\phi_6)-W(\phi_6)=-8/315<0$.  We may also obtain the mean rule of  \eqref{g2} and \eqref{m7}. That is, 
$$
\begin{array}{l}
\widetilde{W}(g)=1/630\, \left[54 \,g(-1) + 416\, g(0)+ 54 \,g(1)+ 125 \,g( -\sqrt{3/5} )+ \right.\\
  \hspace{3cm} +\left. 125\, g( \sqrt{3/5} )+ 243\, g(-\sqrt{3}/3) + 243 \,g(\sqrt{3}/3)\right]\ .
  \end{array}
  $$
This rule has degree $m=7$ and is negative since  $\gamma_8=I(\phi_8)-\widetilde{W}(\phi_8)= -16/1575$.

\medskip
\noindent
Let $g(t)=2/(1+t^2)$ and $I(g)=\int_{-1}^1 g(t)\, dt=\pi$. We have
$$
\widetilde{W}(g)=\displaystyle \frac{1321}{420}, \quad\mbox{whose error is}\quad I(g)-\widetilde{W}(g)\simeq -0.0036\ .
$$
Recall that the error with Simpson's rule is approximately $-0.19$ (see Example \ref{exemplo2}) and so the mean rule $\widetilde{W}$ leads to a remarkable gain in accuracy.
 \end{example}

 \subsection{The mean rule $W(g)$ as a least\--squares approximation}\label{best}
 
 \noindent
We now show that given two rules $A(g)$ and $B(g)$, its mean rule $W(g)$ is the least-squares approximation to the vector of moments
 \begin{equation}\label{mo1}
 \mathbf{h}=(\mu_0,\mu_1,\ldots,\mu_m, \mu)^T\,\, \in \Rb^{m+2},
 \end{equation}
 where, as before,  the moments are: $\mu_j=\displaystyle \int_{-1}^1 \phi_j(t)\, dt$  (for $j=0,\ldots, m$) and $\mu= \displaystyle \int_{-1}^1 \phi_{m+1}(t)\, dt$.
 
 \noindent
 We know that $A(\phi_j)=B(\phi_j)=\mu_i$, for $i=0,\ldots m$ and $A(\phi_{m+1})=\mu_A$,  $B(\phi_{m+1})=\mu_B$ are two distinct numbers (the case $\mu_A=\mu_B$ is trivial since the arithmetic mean is a least\--squares approximation of $h$ by rules of the type \eqref{mo2} below). 
 
 \medskip
 \noindent
 Consider the linear independent ($\mu_A\neq \mu_B$)  vectors of $ \Rb^{m+2}$:
 $$
 \begin{array}{l}
 \mathbf{v}_A= \left(A(\phi_0), A(\phi_1),\ldots, A(\phi_m), A(\phi_{m+1})\right)^T= (\mu_0,\mu_1,\ldots, \mu_m, \mu_A)^T\\
 \\
  \mathbf{v}_B= \left(B(\phi_0), B(\phi_1),\ldots, B(\phi_m), B(\phi_{m+1})\right)^T= (\mu_0,\mu_1,\ldots, \mu_m, \mu_B)^T,\\
 \end{array}
 $$
The least\--squares approximation of \eqref{mo1} by quadrature rules of the form
 \begin{equation}\label{mo2}
 Q(g)= \alpha \, A(g)+ \beta \, B(g), \quad \forall \alpha, \beta\in \Rb,
 \end{equation}
is equivalent to the least\--squares approximation of $\mathbf{h}$ by vectors of the form
 $$
 \mathbf{v}=\alpha \,  \mathbf{v}_A+ \beta\,  \mathbf{v}_B, \quad \forall \alpha, \beta\in \Rb.
 $$
That is,  the minimizer of the function
 $$
 F(\alpha,\beta)=\sum_{i=0}^{m+1} \left( \alpha\, v_{A,i}+ \beta \, v_{B,i}-h_i\right)^2,  \quad \forall \alpha, \beta\in \Rb\ .
 $$
 Denoting by $s$ the number $s=\sum_{i=0}^m \mu_i^2$, the minimum of $F$ is  the  solution of the system of normal equations
 $$
 \left[
 \begin{array}{cc}
 s+\mu_A^2& s+\mu_A\, \mu_B\\
  s+\mu_A\, \mu_B& s+ \mu_B^2
 \end{array}
 \right]\,
 \left[
 \begin{array}{l}
 \alpha\\
 \beta
 \end{array}
 \right] =
 \left[
 \begin{array}{l}
s+ \mu_A\, \mu\\
s+ \mu_B\, \mu\\
 \end{array}
 \right],
 $$
whose solution is
 $$
 \alpha= \displaystyle \frac{\mu-\mu_B}{\mu_A-\mu_B}, \quad \beta=\displaystyle \frac{\mu_A-\mu}{\mu_A-\mu_B} \ .
 $$
Thus,  the rule that is the best approximation of \eqref{mo1}, in the sense of least-squares, coincides with the mean rule \eqref{m2}. For other connections of quadrature with least\--squares approximations see \cite{graca}.

 \begin{example}(A mean rule of degree 7)\label{exemplo7}
 
 \medskip
 \noindent
 Consider the Gauss\--Legendre  rule \eqref{g2} 
$$
A(g)=\displaystyle \frac{5}{9} \, g\left(-  \displaystyle \sqrt{\frac{3}{5}}   \right) +\displaystyle \frac{8}{9}\, g(0)  +\displaystyle \frac{5}{9} \, g\left(  \displaystyle \sqrt{\frac{3}{5}}   \right)\ .
$$
and the open Newton\--Cotes rule with 5 nodes:
  $$
B(g)=\displaystyle \frac{1}{576} \, \left[275\, g(-4/5  ) +100\, g(-2/5)  +402\, g(0) + 100\, g(2/5) + 275\, g(4/5)  \right]\ .
$$
  \begin{table}[htbp]
   \tabcolsep=0.10cm
   \hspace{4cm}
   $$
 \begin{array}{| l |   c | }\hline
\qquad\qquad\mbox{Rule}& \mbox{Error} \\
\hline
A(g)=19/6& -0.0251\\
\hline
B(g)=3\,756/1\, 189& -0.0174\\
\hline
W(g)=156\, 637/49\, 938&0.00496\\
\hline
 \end{array}
$$
 \caption{Errors of $A(g)$, $B(g)$ and the mean rule of degree 7. \label{tabela2} }
 \end{table}
 The rules $A(g)$ and $B(g)$ have degree $m=5$ and are both positive with
 $$
 \begin{array}{l}
 \mu=\displaystyle \int_{-1}^1 \phi_6(t)\, dt =2/7,\\
 \\
 \mu_A=A(\phi_6)= 6/25\quad \Longrightarrow\quad \gamma_A= \mu-\mu_A= 8/175\simeq 0.046>0\\
 \\
  \mu_B=B(\phi_6)= 472/1875\quad \Longrightarrow\quad \gamma_B= \mu-\mu_B= 446/13125\simeq 0.034>0\ .
 \end{array}
 $$
 The respective mean rule of $A(g)$ and $B(g)$ is
\begin{equation}\label{m11}
 \begin{array}{l}
 W(g)=\displaystyle \frac{1}{11\, 088} \left[20\, 625\, g(-4/5)+7\, 500\, g(-2/5)+1\, 606\, g(0)+7\,500\,  g(2/5) + \right.\\
 \hspace{3cm} \left. + 20\, 625\,  g(4/5)-17\, 840\, g(-\sqrt{3/5})-17\, 840\,  g(\sqrt{3/5})\right] \ .
 \end{array}
\end{equation}
 This rule is positive of degree $m=7$:
 $$
 \begin{array}{l}
 W(\phi_j)= I(\phi_j), \,\, j=0,\ldots, 7,\\
  \mu_W=W(\phi_8)=26/125, \qquad \gamma_W= I(\phi_8)-\mu_W=16/1125\simeq 0.014>0 \ .
  \end{array}
 $$
 Note that $\gamma_W<  \gamma_B<\gamma_A$ and $\gamma_W/(\gamma_A+\gamma_B)/2)\simeq 0.36$, suggesting that the absolute error of $W(g)$  given by \eqref{m11}
 is approximately $1/3$ of the arithmetic mean of errors of the rules $A(g)$ and $B(g)$.
 
 \medskip
 \noindent
 For $I(g)=\displaystyle \int_{-1}^1 2/(1+t^2)\, dt=\pi$, we compare in Table \ref{figtab} the errors of $A(g)$, $B(g)$ with the error of the mean rule $W(g)$.
 
 \medskip
  \noindent
 Dividing the interval $[-1,1]$ into $n\geq 2$ equal parts, and considering  the composite rules $A_n(g)$, $B_n(g)$ and $W_n(g)$,   the gain of accuracy of the mean rule $W_n(g)$ relatively to the two rules of degree $5$ is numerically illustrated by the Table \ref{figtab}, where the respective errors are displayed. For subintervals of length $h=2/1024\simeq 0.0020$ the composite rule $W_{1024}(g)$ produces an approximation of $\pi$ with 33 significant digits.

   \begin{table}[hbt] 
\begin{center}
  \includegraphics[scale=0.420]{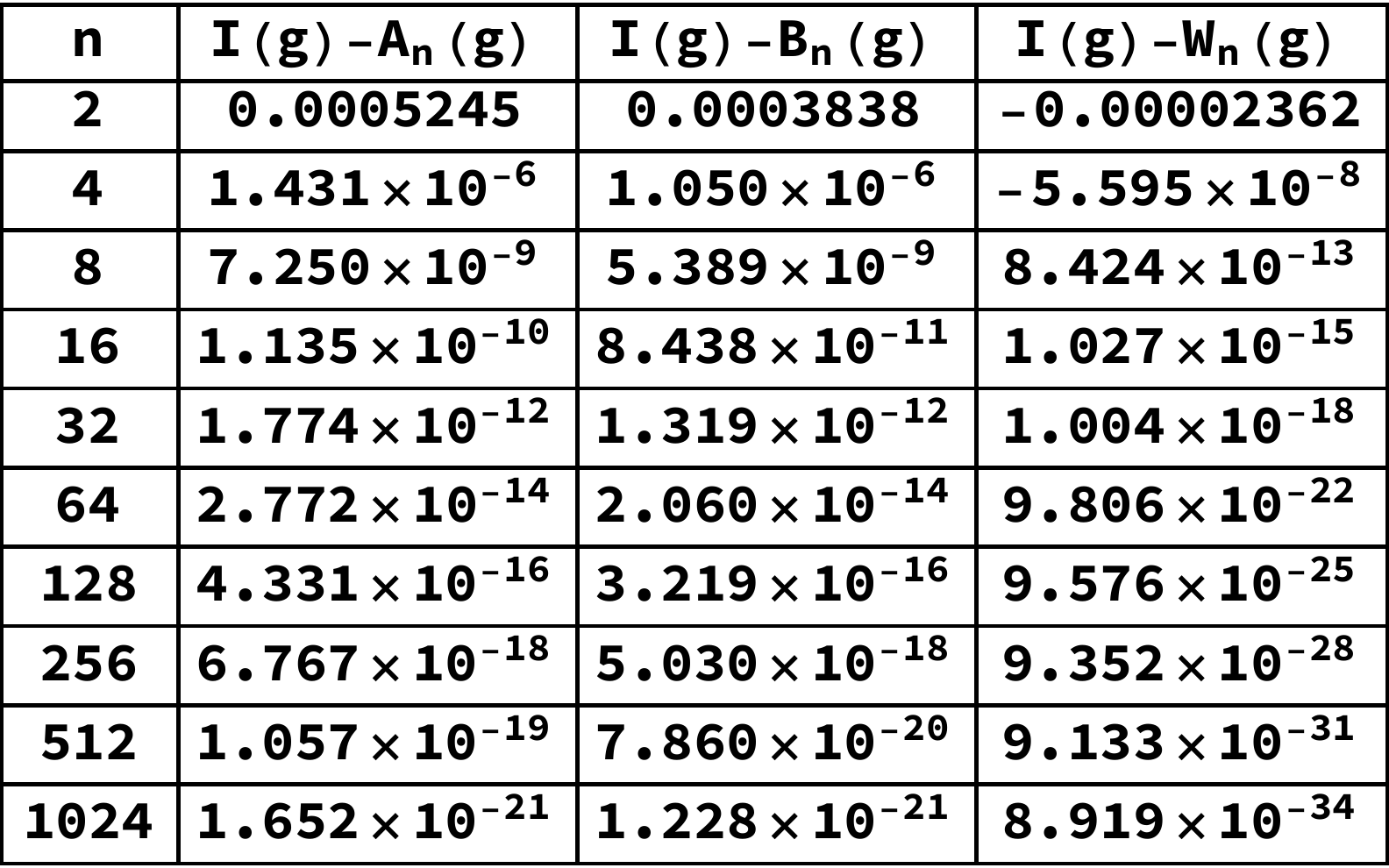} 
 \caption{Errors for the composite rules $A_n$, $B_n$ and $W_n$ . \label{figtab}}
\end{center}
\end{table}

  \end{example}

\section{Open rules of arbitrary degree with pseudorandom rational nodes}\label{openrules}

\noindent
The usual approach to approximate $I(g)=\displaystyle \int_{-1}^1 g/t)\, dt$ is to construct a quadrature rule  by mean of the interpolating polynomial of a given set of  nodes in $[-1,1]$. It is well-known that the resulting rules can be highly unstable when one increases the number of nodes. For instance, this is the case  of the closed Newton\--Cotes rules with a number of equally spaced nodes greater than  $ 10$. In order to overcoming such instability we are going to purpose the construction of rules of high degree by combinations of starting rules of degree one.

\medskip
\noindent
 For any integer $k\geq 1$, suppose it is given  $k+1$ {\em open} rules
\begin{equation}\label{open1}
\begin{array}{l}
Q_0(g)= 2\, g(0)\qquad\qquad  \mbox{(midpoint rule)}\\
Q_1(g)= g(-t_1)+ g(t_1) \\
 Q_2(g)= g(-t_2)+ g(t_2) \\
\hspace{2cm}\vdots  \\
Q_k(g)= g(-t_k)+ g(t_k), \\
\end{array}
\end{equation}
where  the symmetrical nodes $t_i$, for $i=1,\ldots,k$, belong to $(-1,1)$,  are nonzero distinct   {\em rational numbers}. Consider the combined rule
\begin{equation}\label{open2}
W_k(g)=a_0\, Q_0(g)+a_1\, Q_1(g)+\ldots+ a_k\, Q_k(g)\ .
\end{equation}

\noindent
In general the  rules \eqref{open1} are numerically stable and the assumption of the rationality of the nodes $t_i$, has the advantage of  obtaining values free of rounding errors provided exact computation is performed. This is possible with any symbolic language system whose arithmetic is exact for rational numbers such as  the {\sl Mathematica}. Thus, a rational linear combination of the rules \eqref{open1} will be numerically more interesting than the consideration of rules computed directly from an interpolatory rule of the $2\, k+1$ nodes.  

\medskip
\noindent
Due to the fact  we are considering nodes symmetrically distributed in  $(-1,1)$, for any odd  $j\geq 1$ we have
\begin{equation}\label{opena}
Q_i(\phi_j)= I(\phi_j)=0, \qquad \text{$j$ odd}, \quad  i=0,1,\ldots, k,
\end{equation}
 
 \medskip
 \noindent
  Moreover, all the rules in \eqref{open1} have degree one since
$$
\begin{array}{l}
Q_i(\phi_0)=2=I(\phi_0)\\
Q_i(\phi_1)=0=I(\phi_1)\\
Q_i(\phi_2)= 2\, t_i^2, \qquad\qquad i =0,\ldots, k\ .
\end{array}
$$
The rules \eqref{open1} cannot be of degree $2$, unless $t_i^2=1/3$, which is never the case since we are assuming the rationality of the nodes. If the combined rule \eqref{open2} has a certain even degree $d$, then $W_k(g)$ has degree at least $d+1$ due to \eqref{opena}. Thus, the degree of the combined rule is always odd.

\medskip
\noindent
In the following proposition we prove  that there exist unique {\em rational} coefficients $a_0,a_1,\ldots, a_k$ such that the linear combination  \eqref{open2}
has odd degree  $m=2\, k+1$. This result suggests that an efficient algorithm can be designed to obtain combined rules of arbitrary degree, starting from rules of degree one.

\begin{proposition}\label{prop3}
Let be given  $k+1$ rules as in  \eqref{open1} with $t_i\in\mathbb{Q}$, and consider the linear combination \eqref{open2} with coefficients $a_0, \ldots, a_k$. Then, 
\begin{enumerate}
\item[(i)] the weights $a_0,a_1,\ldots, a_k$ exist and are unique;
\item[(ii)] $a_0+a_1+\ldots+ a_k=1$;
\item[(iii)] the weights are rational numbers; 
\item[(iii)] the degree of the combined rule $W_k(g)$ in \eqref{open2} is $m=2\, k+1$.
\end{enumerate}
\end{proposition}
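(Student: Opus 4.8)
The plan is to turn the four claims into a single linear system for the weights and read everything off its structure. Since the $2k+1$ nodes $0,\pm t_1,\dots,\pm t_k$ are symmetric about the origin and each $Q_i$ weights $\pm t_i$ equally, every $Q_i$ (hence $W_k$) is automatically exact on odd monomials, by \eqref{opena}. Consequently the only conditions that constrain $a_0,\dots,a_k$ are the even ones, $W_k(\phi_{2\ell})=I(\phi_{2\ell})=\tfrac{2}{2\ell+1}$ for $\ell=0,1,\dots,k$. Writing $x_i=t_i^2$, these read: for $\ell=0$, $a_0+\sum_{i=1}^k a_i=1$ (since $Q_0(\phi_0)=Q_i(\phi_0)=2$); and for $\ell=1,\dots,k$, $\sum_{i=1}^k a_i x_i^{\ell}=\tfrac{1}{2\ell+1}$ (since $Q_0(\phi_{2\ell})=0$ while $Q_i(\phi_{2\ell})=2x_i^\ell$).

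The coefficient matrix of this system in $(a_0,a_1,\dots,a_k)$ is the bordered Vandermonde matrix whose first row is all ones and whose $\ell$-th row ($\ell\ge1$) is $(0,x_1^\ell,\dots,x_k^\ell)$. Expanding along the first column, its determinant equals $\big(\prod_{i=1}^k x_i\big)\prod_{1\le i<j\le k}(x_j-x_i)$. This is nonzero: $t_i\ne0$ forces $x_i\ne0$, and the $x_i=t_i^2$ are pairwise distinct, since two coincident values $t_i^2=t_j^2$ would make the rules $Q_i$ and $Q_j$ identical, a case we exclude. Hence the system is nonsingular, giving existence and uniqueness of $(a_0,\dots,a_k)$, which is claim (i); the $\ell=0$ equation is exactly claim (ii); and since all matrix and right-hand-side entries are rational, Cramer's rule yields rational weights, which is claim (iii).

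By construction $W_k$ is exact for $\phi_0,\dots,\phi_{2k}$, and by the odd-symmetry remark also for $\phi_1,\dots,\phi_{2k+1}$, so $W_k$ has degree at least $2k+1$. Since the nodes are symmetric the degree is always odd, as noted before the proposition, so the first possible failure is at the even monomial $\phi_{2k+2}$, and it suffices to prove $W_k(\phi_{2k+2})\ne I(\phi_{2k+2})$. Because $W_k$ is the interpolatory rule on the $2k+1$ nodes, with nodal polynomial $\Omega(t)=t\prod_{i=1}^k(t^2-t_i^2)$, writing $\Pi\phi_{2k+2}$ for the degree-$\le 2k$ interpolant gives $\phi_{2k+2}-\Pi\phi_{2k+2}=\Omega(t)\,(t-\beta)$ for some $\beta$; using that $\Omega$ is odd, so $\int_{-1}^1\Omega=0$, one obtains $I(\phi_{2k+2})-W_k(\phi_{2k+2})=\int_{-1}^1 t\,\Omega(t)\,dt=\int_{-1}^1 t^2\prod_{i=1}^k(t^2-t_i^2)\,dt=:D_k$. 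Claim (iv) is therefore equivalent to $D_k\ne0$.

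The main obstacle is exactly the non-vanishing of $D_k$, and this is where the rationality hypothesis must enter. Expanding $\prod_{i=1}^k(t^2-t_i^2)=\sum_{m=0}^k c_m t^{2m}$ with $c_m\in\Qb$ gives $D_k=\sum_{m=0}^k c_m\,\tfrac{2}{2m+3}$, a rational number, so the difficulty is not to evaluate it but to show it cannot be zero. For $k=1$, $D_1=\tfrac27-\tfrac23\,t_1^2$ vanishes only when $t_1^2=3/5$, which is impossible for $t_1\in\Qb$, so $D_1\ne0$ automatically; this is the model for the general case. In general $D_k=0$ cuts out a single hypersurface in the coordinates $(t_1^2,\dots,t_k^2)$, and the remaining task is to show that this hypersurface contains no point all of whose coordinates are squares of rationals. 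I expect this Diophantine exclusion, rather than any of the linear-algebra steps, to be the genuinely hard part, and it is precisely where the assumption $t_i\in\Qb$ is indispensable.
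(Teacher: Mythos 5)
You have produced a complete and rigorous proof of (i), (ii), (iii) and of the lower bound $\deg W_k\geq 2k+1$, and on these points your route is actually tighter than the paper's. The paper only writes out the triangular systems for $k=1$ and $k=2$ and declares that the general case ``will follow by induction on $k$'' without ever giving the induction step; your one-shot argument --- the bordered Vandermonde matrix with first row $(1,1,\dots,1)$ and rows $(0,x_1^{\ell},\dots,x_k^{\ell})$ in $x_i=t_i^2$, determinant $\bigl(\prod_{i=1}^k x_i\bigr)\prod_{i<j}(x_j-x_i)\neq 0$, with (ii) read off the $\ell=0$ row and (iii) from Cramer's rule over $\Qb$ --- settles all $k$ at once. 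Your reduction of the exact-degree claim is also valid: since $W_k$ has $2k+1$ nodes and is exact on ${\cal P}_{2k}$ it is the interpolatory rule with nodal polynomial $\Omega(t)=t\prod_{i=1}^k(t^2-t_i^2)$, and the error representation plus $\int_{-1}^1\Omega=0$ correctly gives $I(\phi_{2k+2})-W_k(\phi_{2k+2})=D_k=\int_{-1}^1 t^2\prod_{i=1}^k(t^2-t_i^2)\,dt$, so (iv) is exactly the statement $D_k\neq 0$. (One slip: $D_1=\tfrac{2}{5}-\tfrac{2}{3}t_1^2$, not $\tfrac{2}{7}-\tfrac{2}{3}t_1^2$; your conclusion $t_1^2=3/5$ is consistent with the correct value and coincides with the paper's $k=1$ argument, where rationality of $t_1$ rules out degree $4$.)

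The genuine gap is the one you candidly flag: you do not prove $D_k\neq 0$ for $k\geq 2$, so (iv) is only established for $k=1$. You should know, however, that the paper's own proof has precisely the same hole, only hidden: for $k=2$ it simply asserts ``$W_2(\phi_6)\neq I(\phi_6)=2/7$'' with no justification whatsoever, and the induction for general $k$ is never carried out. Unwinding that assertion gives exactly your hypersurface: for $k=2$, $D_2=0$ reads $35x_1x_2-21(x_1+x_2)+15=0$ with $x_i=t_i^2$, and excluding rational solutions amounts (after setting $x_1=u^2$, clearing denominators) to showing the quartic $w^2=21(7u^2-5)(5u^2-3)$ has no admissible rational points --- a genus-one Diophantine problem that neither you nor the author resolves. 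So your proposal does not close the proof, but it is not weaker than the paper's: it proves (i)--(iii) in full generality where the paper gestures at induction, and it makes explicit the Diophantine non-vanishing statement that the paper's treatment of (iv) silently assumes. To repair either proof one would need an actual argument that the hypersurface $D_k=0$ contains no point with all coordinates squares of rationals (or a weaker but sufficient statement, e.g.\ a congruence or valuation obstruction at the relevant primes), and for $k\geq 2$ that remains open in both texts.
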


\begin{proof} For the sake of simplicity we just prove the statements for  $k=1$ and $k=2$ but the result for any   other $k\geq 1$ will follow by induction on $k$.

\noindent
 The combined rule for $k=1$ is
$$
W_1(g)=a_0\, Q_0(g)+a_1\, Q_1(g)\ .
$$
Due to \eqref{opena},  one has  $W_1(\phi_1)=0$ and $I(\phi_1)=0$. So, $W_1(g)$
has degree $d \geq 2$ if and only if the following two conditions hold:
$$
W_1(\phi)=I(\phi_i), \quad i=0, 2,
$$
 \noindent
that is,
$$
\left\{
\begin{array}{ccc}
2\,a_0&+2\, a_1&= 2\\
 & 2\, t_1^2\, a_1&=2/3\ .
\end{array}
\right.
$$
The first equation is  just $(ii)$. Moreover, the above system has the unique solution  $a_1=1/(3\, t_1^2), \,\, a_0=1-a_1$. Since $t_1\in\mathbb{Q}$, then  $a_0,a_1$ are also rational and $(i)$ and $(iii)$ hold. Finally, by
\eqref{opena} we have 
$$
W_1(\phi_3)=I(\phi_3)=0,
$$
which implies  that the rule has degree $m\geq 3$. However, as
$$
W_1(\phi_4)= 2/3\, t_1^2,  \quad I(\phi_4)=2/5,
$$
the equation $2/3\, t_1^2-2/5=0$ does not have solution in $\mathbb{Q}$
and so the rule $W_1(g)$ cannot have degree $4$. So, $m=3=2\, k+1$. 

\medskip
\noindent
Let $k=2$. Consider the combined rule 
$$
W_2(g)=a_0\, Q_0(g)+a_1\, Q_1(g)+a_2\, Q_2(g)\ .
$$
Due to \eqref{opena},  trivially  $W_1(\phi_1)=0$ and $I(\phi_3)=0$. So $W_2(g)$
has degree $d \geq 4$ if and only if the following three conditions hold:
$$
W_2(\phi)=I(\phi_i), \quad i=0, 2,4,
$$

\noindent
that is,
$$
\left\{
\begin{matrix}
2\,a_0&+&2\, a_1&+&2\, a_2&=& 2\\
 && 2\, t_1^2\, a_1&+ &2\,t_2^2\, a_2&=&2/3\\
  && 2\, t_1^4\, a_1& +&2\,t_2^4 \, a_2&=&2/5\ . 
\end{matrix}
\right.
$$
Equivalently,
\begin{equation}\label{open5}
 \left\{
\begin{matrix}
a_0&+& a_1&+&a_2&=& 1\\
 && t_1^2\, a_1&+& t_2^2 a_2 &=&1/3\\
  &&&& (t_1^2 \,t_2^2+ t_2^4) \,a_2&=&t_1^2/3+1/5\ .\\
\end{matrix}
\right.
 \end{equation}

\medskip
\noindent
The first equation of the above triangular system is just $(ii)$. Also,  as  the solution of the  this system consists of sums, products and quotients of nonzero rationals, then  $a_0,a_1,a_2\in \Qb$ and so $(i)$ is true. Finally, by
\eqref{opena} we have 
$$
W_2(\phi_5)=I(\phi_5)=0,
$$
and so that the rule has degree $m\geq 5$. However,
$$
W_2(\phi_6)\neq \quad I(\phi_6)=2/7 \ .
$$
which implies that  $W_2(g)$ cannot have degree $6$. So, its degree is  $m=5=2\, k+1$ and $(iii)$ holds. 
\end{proof}

\begin{remark}\label{rem3}
 
\noindent
{ \em The system \eqref{open5}  is almost singular if one takes the nodes $t_1\neq t_2$ very close to the central node $0$. Therefore for non exact arithmetic one expects that the combined rule $W_2(g)$ will be numerically unstable for $t_1,t_2 \simeq 0$. Thus, one might prefer a combined rule with distinct nodes $t_1,t_2$  closer to 1 rather than 0. However, for exact computations on the rationals the solution $a_0,a_1,a_2$ is exact and so, by construction, the rule $W_2(g)$ is stable, assuming that the function $g$ is sufficiently smooth in $[-1,1]$.
}
\end{remark}

\begin{example}{(A combined open rule of degree 7)}\label{exemplo8}

\medskip
\noindent
Consider the rules
$$
\begin{array}{l}
Q_0(g)= 2\, g(0)\\
Q_1(g)= g(-1/2)+ g(1/2)\\
Q_2(g)= g(-1/3)+ g(1/3)\\
Q_3(g)= g(-1/4)+ g(1/4),\\
\end{array}
$$
and
the respective combined rule 
$$
W_3(g)=a_0 \, Q_0(g)+a_1 \, Q_1(g)+a_2 \, Q_2(g)+a_3 \, Q_3(g)\ .
$$
As predicted by Proposition~\ref{prop3}, this rule has degree $m=2\,k+1=7$. Indeed,
the weights $a_i$ satisfy the conditions $W_3(\phi_i)= I(\phi_i)$, for $i=0,2,4,6=2\, k$, that is, they are solutions of the system
$$
\left[
\begin{array}{cccc}
1&1&1&1\\
0&1/2&2/9&1/8\\
0&1/8&2/81&1/128\\
0&1/32&2/729&1/2048
\end{array}
\right]
\,
\left[
\begin{array}{c}
a_0\\
a_1\\
a_2\\
a_3
\end{array}
\right]
 =
 \left[
\begin{array}{c}
1\\
2/3\\
2/5\\
2/7
\end{array}
\right]\ .
$$
Solving this system,  we get
\begin{equation}\label{open6}
\begin{array}{l}
W_3(g)= \displaystyle \frac{-4\,426}{105}\, Q_0(g)+ \displaystyle \frac{5\,344}{315}\, Q_1(g)
-\displaystyle \frac{5\, 589}{49}\, Q_2(g)+  \displaystyle \frac{309\, 248}{2\,205}\, Q_3(g).
\end{array}
\end{equation}
One may also confirm that the sum of the weights in \eqref{open6} is 1.
As
\begin{equation}\label{open7}
\gamma_{{}_{W_3}}= I(\phi_8)-W_3(\phi_8)=\displaystyle \frac{1\, 817}{15\, 120} \simeq 0.1202>0,
\end{equation}
the combined rule as degree $m=7$ (and is a positive rule).

\medskip
\noindent
In Example~\ref{exemplo7} we have constructed another rule of degree 7, the mean rule $W(g)$ in \eqref{m11}.
For the model function $g$, comparing the parameter $\gamma_{{}_{W_3}} \simeq 0.1202$  with the corresponding  parameter  $\gamma_W\simeq 0.014$,  it is expected that  the rule \eqref{m11} will perform better than the rule \eqref{open7} (particularly when we considerer the composite versions  as in Table \ref{figtab}). However, the  rule $W_3(g)$ uses only rational nodes whereas the rule \eqref{m11} does not. So, due to  Remark \ref{rem3}, we prefer the rule $W_3(g)$.

\end{example}

\medskip
\noindent
Composite rules of a certain degree $m$ for which the respective value of the parameter $\gamma$ is very close to zero are particularly useful. In fact,   in this case it means that the nodes $t_1,t_2,\ldots$ have been chosen such that the combined rule is almost optimal in the sense that it acts like a perturbed rule of the optimal rule of maximum degree. In the following example, from a set of starting rules of degree $1$ of type \eqref{open1}\--\eqref{open2}, we construct a $11$\--degree combined rule  whose parameter $\gamma$ is small.

\begin{example}{(A combined open rule of degree 11)}\label{exemplo9}

\medskip
\noindent
Consider the Legendre polynomial of degree 10
$$
P_{10}(t)=1/256 \left(-63 + 3465\, t^2 - 30030 \,t^4 + 90090 \,t^6 - 109395 \,t^8 + 
   46189\, t^{10}\right),
   $$
whose roots   belong to $(-1,1)$. It is well known that an interpolatory rule taking as nodes the 10 zeros of the polynomial $P_{10}(t)$  (know as a 10\--point Gauss\--Legendre rule) is a rule of maximal degree $m=17$.

\medskip
\noindent
A combined rule of the type \eqref{open2}, based on degree 1 rules \eqref{open1} whose nodes are (a certain number of) rational approximations of the zeros of $P_{10}$ is expected to be almost as accurate as the maximal degree rule of Gauss\--Legendre type. For instance,
consider for the positive nodes  the following rational approximations of the 5 positive roots of $P_{10}(t)$ (computed with an error\footnote{Using the {\sl Mathematica} command $Rationalize[argument, 10^{-16}]$ .} $\leq 10^{-16}$),
$$
\begin{array}{l}
t_1= \displaystyle \frac{41349881}{277750224},\,\,
 t_2=\displaystyle \frac{26322066}{60734531}\,\,
 t_3=\displaystyle\frac{ 209827923}{308838634}\,\,
t_4=\displaystyle \frac{130457471}{150806838}\,\,
t_5=\displaystyle \frac{ 272617463}{279921589} \ .
\end{array}
$$
These nodes define the rules  $Q_0(g), Q_1(g),\ldots, Q_5(g)$ in \eqref{open1}.
Let us now compute the weights of the corresponding combined rule
$$
W_5(g)=a_0\, Q_0(g)+a_1\, Q_1(g)+a_2\, Q_2(g)+a_3\, Q_3(g)+a_4\, Q_4(g)+a_5\, Q_5(g)\ .
$$
The conditions $W_5(\phi_i)= I(\phi_i),\,\, i=0,2,4,6,8,10$ give rise  to a system in the unknowns $a_0,\ldots,a_5$ whose solution is given in Figure \ref{figcoef}

  \begin{figure}[hbt] 
\begin{center}
\hspace{-3mm}
  \includegraphics[scale=0.53]{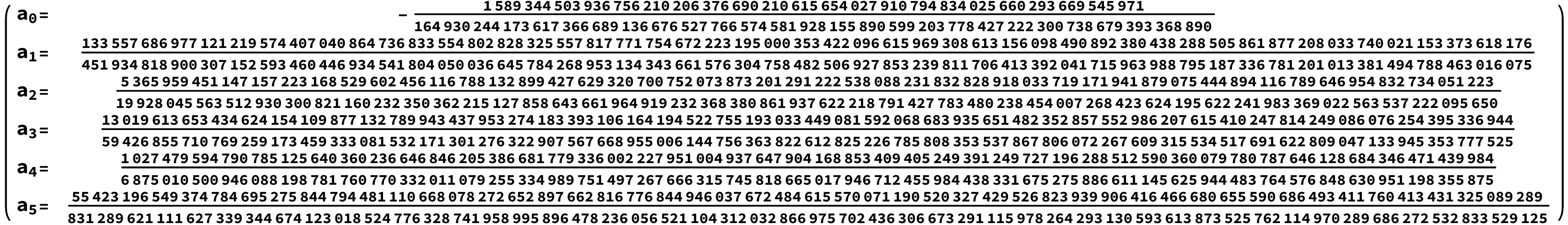} 
 \caption{Weights for the combined rule $W_5$ . \label{figcoef}}
\end{center}
\end{figure}

 \noindent
The rule has degree $m=11$ (see Proposition~\ref{prop3}) and its parameter  $\gamma_{{}_{W_5}}$ is
\begin{equation}\label{gama1}
 \gamma_{{}_{W_5(g)}}= I(\phi_{12})- W_5(\phi_{12})  \simeq 2.105*10^{-17}\ .
\end{equation}
Comparing with the parameter $\gamma_W\simeq 0.014 $ of the rule \eqref{m11}, in Example \ref{exemplo7},   (or  \eqref{open7} for a another rule of degree 7), one can predict that the composite rule $W_5(g)_n$, for $n$ subintervals of $[-1,1]$, will produce better numerical results than the rule $W$.

\noindent
For $I(g)= \displaystyle \int_{-1}^1 2/(1+t^2)\, dt=\pi$,  the Table~\ref{figtab4}  shows the errors of the composite rule $W_5 (g)_n$, for $n=2, 4,8,\ldots 1024$ subintervals. Comparing the values in this table with the ones in Table~\ref{figtab}  we observe a real improvement of the accuracy  relatively to the approximations obtained with the rule \eqref{m11}. Note that the last computed value of $W_5 (g)_{1024}$ gives an approximation of $\pi$ with $60$ significant digits. 
   \begin{table}[hbt] 
\begin{center}
  \includegraphics[scale=0.420]{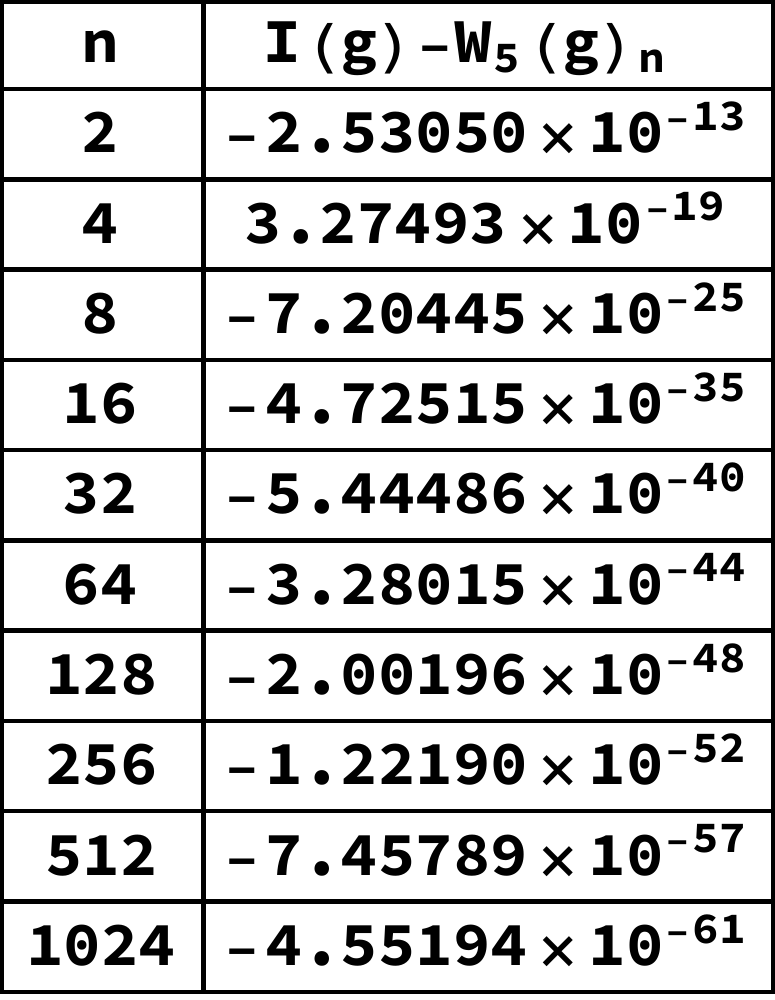} 
 \caption{Errors for the composite $W_5(g)_n$ rule for $g(t)=2/(1+t^2)$, with $-1\leq t\leq 1$  . \label{figtab4}}
\end{center}
\end{table}

\noindent
In spite of the starting rules $Q_0(g), \ldots Q_5(g)$ being rules of degree 1 only, we invite the reader to verify that a  rule such as the composite Simpson rule (degree 3) is totally unable to give an approximation of $\pi$ with the referred high precision of the value
$W_5 (g)_{1024}$.

\medskip
\noindent
Thus, we see that the rules $Q_0(g),\ldots,Q_5(g)$ are like a basis for the combined rules $W_5(g)$ of degree $m=11$. It is interesting to observe what happens when we replace the element $Q_0(g)$ (midpoint rule) by a new one, say $\tilde Q_0(g)=g(-1)+ g(1)$ (trapezoidal rule) and consider the new combined rule $\tilde W_5(g)$. The same code that produced \eqref{gama1} and the error values in the Table~\ref{figtab4} gives for $\tilde W_5(g)$:
$$
 \gamma_{{}_{\tilde W_5 (g)}}= I(\phi_{12})- \tilde W_5(\phi_{12})  \simeq - 5.243*10^{-18}\ .
 $$
and the respective errors are displayed in Table \ref{tabelafinal}.
 \begin{table}[hbt] 
\begin{center}
  \includegraphics[scale=0.420]{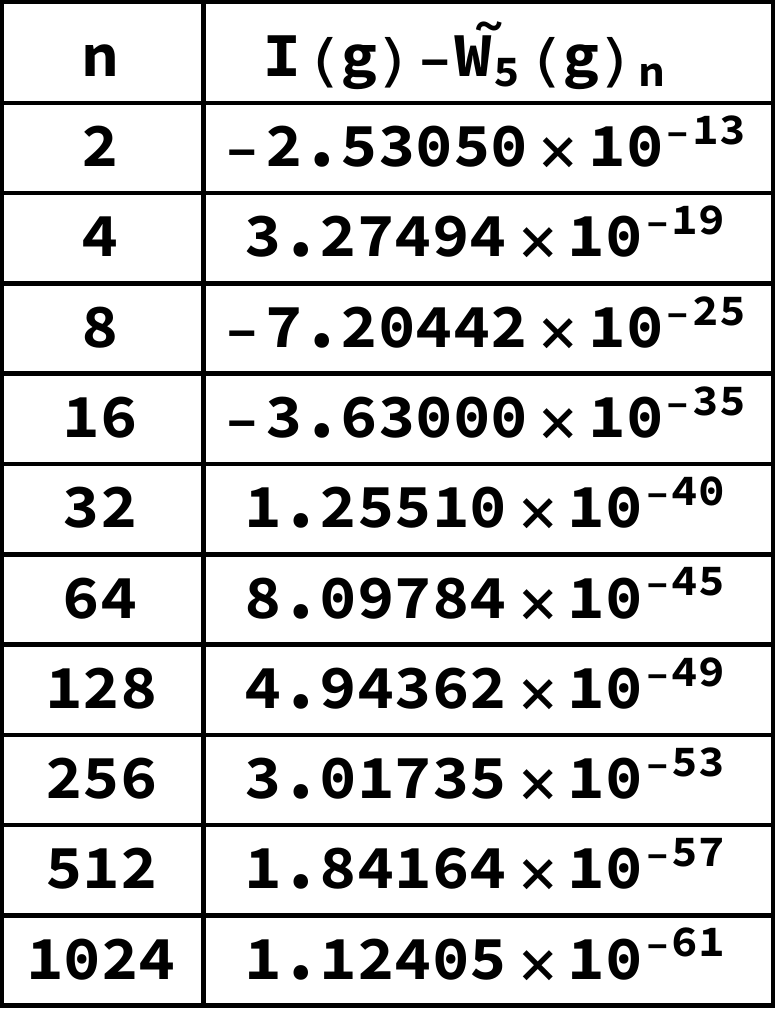} 
 \caption{With $Q_0(g)=g(-1)+ g(1)$, errors for the composite $\tilde W_5(g)_n$ (companion to the rule given in Table~\ref{figtab4}) . \label{tabelafinal}}
\end{center}
\end{table}

\medskip
\noindent
Thus $W_5 (g)$ and  $\tilde W_5 (g)$ are companion rules. Consequently, for $g(t)=2/(1+t^2)$, we have
$$
\tilde W_5 (g)_{1024}<\pi< W_5 (g)_{1024} \ . 
$$
In fact, rounding $\tilde W_5 (g)_{1024}$ to 61 decimal places we get
$$
\tilde W_5 (g)_{1024}=3.141592653589793238462643383279502884197169399375105820974944
$$
whose error is
$$
\pi-\tilde W_5 (g)_{1024}\simeq 1.12\,*\,10^{-61} \ .
$$
 \end{example}
\subsection{High degree pseudorandom combined rules}\label{secran}

Considering $k$ pseudorandom rational numbers in $(0,1)$, the  1\--degree starting rules $Q_i(g)$ and the respective combined rule $W_k(g)$
\begin{equation}\label{random1}
\begin{array}{ll}
Q_i(g)&= g(-t_i)+ g(t_i),\qquad i=0,\ldots k\\
W_k(g)&=\sum_{j=0}^k a_j\, Q_j(g),
\end{array}
\end{equation}
the Proposition~\ref{prop3} is obviously valid and so combined rules with pseudorandom nodes preserve the properties discussed before.

\begin{example}{(A pseudorandom rule of degree $m=151$)}\label{exemplo10}

\medskip
\noindent
In order to test the stability property of the combined rule when  $k$ is large, we  take $k=75$. Using the Mathematica command $\verb+SeedRandom[2020]+$ we generate 76 pseudorandom rational numbers\footnote{Recurring to the command $Rationalize[RandomReal[{0,1}],10^{-4}]$.}. Then, we compute the weights of the  combined rule $W_{75}(g)$  in \eqref{random1}. Finally,  we use  the composite version $W_{75}(g)_n$, for $n$ subintervals of $[-1,1]$,  applied to the function $g(t)=2/(1+t^2)$. 

\medskip
\noindent
The combined rule $W_{75}(g)$ has degree $m=151$ and its error parameter is
$$
 \gamma_{{}_{ W_{75} (g)}}= I(\phi_{152})-  W_{75}(\phi_{152})  \simeq 3.151*10^{-22}\ .
 $$
We show in Table~\ref{ola} the errors for the composite rule with  $n=2,4,8,\ldots, 1024$ subintervals, where $I(g)=\pi$. Thus, $W_{75}(g)_{1024}$ produces and approximation of $\pi$ with $507$ significant digits.

 \begin{table}[hbt] 
\begin{center}
  \includegraphics[scale=0.420]{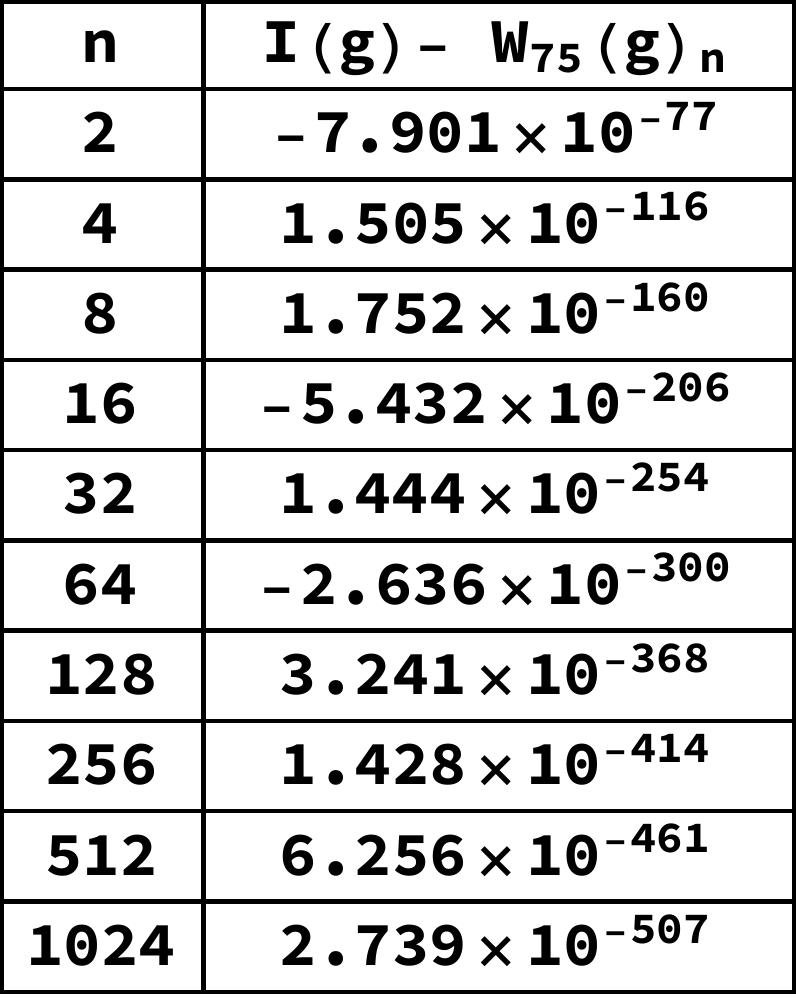} 
 \caption{ Errors for the composite rule $W_{75}(g)_n$  . \label{ola}}
\end{center}
\end{table}

\end{example}
 


\end{document}